\numberwithin{equation}{section} 
\newtheorem{theorem}{Theorem}[section] 
\newtheorem{lemma}[theorem]{Lemma} 
\newtheorem{corollary}{Corollary}[section] 
\newtheorem{remark}{Remark}[section] 
\newenvironment{proof}[1][Proof]{\begin{trivlist}
\item[\hskip \labelsep {\bfseries #1}]}{\end{trivlist}}
\numberwithin{equation}{section} 
\begin{document}


\title {Generalized Hilbert Operator Acting on Hardy Spaces
\footnote{   The research
was supported by Zhejiang Province Natural Science Foundation of China (Grant No. LY23A010003).}}
\author{  Huiling Chen\footnote{E-mail address:  HuillingChen@163.com}\quad\quad Shanli Ye\footnote{Corresponding author, E-mail address: slye@zust.edu.cn} \\
(\small \it School of Science, Zhejiang University of Science and Technology,
Hangzhou 310023, China)}
 \date{}
\maketitle
\begin{abstract} Let  $\alpha>0$ and $\mu$ be a positive Borel measure on the interval $[0,1)$. The Hankel matrix $\mathcal{H}_{\mu,\alpha}=(\mu_{n,k,\alpha})_{n,k\ge0}$ with entries $\mu_{n,k,\alpha}=\int_{[0,1)}^{}\frac{\Gamma(n+\alpha)}{\Gamma(n+1)\Gamma(\alpha)}t^{n+k}d\mu(t)$, induces, formally, the generalized-Hilbert operator as
$$
\mathcal{H}_{\mu,\alpha}\left ( f \right ) \left ( z \right ) =\sum_{n=0}^{\infty} \left (\sum_{k=0}^{\infty} \mu_{n,k,\alpha}a_k  \right )z^n,z\in\mathbb{D}
$$
where $f(z)={\textstyle \sum_{k=0}^{\infty }} a_kz^k$ is an analytic function in $\mathbb{D}$.
This article is devoted to study the measures $\mu$ for which $\mathcal{H}_{\mu,\alpha }$ is a bounded(resp., compact) operator from $H^p(0<p\le1)$ into $H^p(1\le q<\infty)$. Then, we also study the analogous problem in the Hardy spaces $H^p(1\le p\le2)$. Finally, we obtain the essential norm of $\mathcal{H}_{\mu,\alpha}$ from $H^p(0<p\le1)$ into $H^p(1\le q<\infty)$.
  \\
{\small\bf Keywords}\quad
Hilbert operator; Hardy space; Carleson measure; Essential norm
 \\
    {\small\bf 2020 MR Subject Classification }\quad 47B38, 47B35, 30H10 \\

\end{abstract}

\maketitle 
\section{Introduction}

\hspace{1.2em}
\quad   
\qquad   
\par  
Define the open unit disk $\mathbb{D}$ in the complex plane $\mathbb{C}$ as $\mathbb{D}=\left \{ z\in \mathbb{C}:\left | z \right |<1  \right \}$, and let $H(\mathbb{D})$ represent the set of all holomorphic functions in $\mathbb{D}$.

 If $0<r<1$ and $f\in H(\mathbb{D})$, we set
\begin{align}
   & M_p (r,f)=\left( \frac{1}{2\pi} \int_0^{2\pi} |f(re^{i \theta})|^p d\theta \right)^\frac{1}{p}, \quad 0<p<\infty.  \notag\\
   & M_\infty (r,f)=\sup_{|z|=r}|f(z)|.\notag
\end{align}

For $0<p\leq \infty$, the Hardy space $H^p$ consists of those $f \in H(\mathbb{D})$ with
$$||f||_{H^p} \overset{def}{=} \sup_{0<r<1}M_p(r,f)<\infty.$$

We refer to \cite{3} for the terminology and findings on Hardy spaces.

The space $BMOA$ consists of those functions $f \in H^1$ whose boundary values has bounded mean oscillation on $\partial \mathbb{D}$, in accordance with the definition by John and Nirenberg. Numerous properties and descriptions can be attributed to $BMOA$ functions. Let us mention the following: for $a \in \mathbb{D}$, let $\varphi_a$ be the M$\ddot{o}$bius transformation
defined by $\varphi_a(z)=\frac{a-z}{1-\overline{a}z}$. If $f$ is an analytic function in $\mathbb{D}$, then $f \in BMOA $ if and only if
$$||f||_{BMOA} \overset{def}{=} |f(0)|+||f||_*< \infty,$$
where
$$||f||_* \overset{def}{=}\sup_{a\in\mathbb{D}}\left\{\int_{\mathbb{D}}|f'(z)|^2(1-|\varphi_a(z)|^2)dA(z)\right\}^{1/2},$$
 where $dA(z)=\frac{1}{\pi}dxdy$ denotes the normalized Lebesgue area measure on $\mathbb{D}$. For an exposition on the theory of $BMOA$ functions, one should review the content in reference \cite{4}.

The Bloch space $\mathcal{B}$ consists of those functions $f\in H(\mathbb{D})$ with
$$\|f\|_{\mathcal{B}}= |f(0)|+\sup_{z\in\mathbb{D}}(1-|z|^2)|{f}'(z)|<\infty.$$

Consult references \cite{1,2} for the terminology and findings concerning Bloch-type spaces. It is a recognized fact that $BMOA\varsubsetneq\mathcal{B}$.

For $f\left ( z \right ) = {\textstyle \sum_{n=0}^{\infty}} a_nz^n\in H\left ( \mathbb{D}  \right )$ and any complex parameters $\beta$ and $\gamma$ such that neither $1+\beta$ nor $1+\beta+\gamma$ is a negative integer, $R^{\beta,\gamma}$ called the fractional differential operator\cite{19} as follows:
$$
R^{\beta,\gamma}f(z)=\sum_{n=0}^{\infty}\frac{\Gamma(2+\beta)\Gamma(n+2+\beta+\gamma)}{\Gamma(2+\beta+\gamma)\Gamma(n+2+\beta)}a_nz^n.
$$
\par
Let $\mu$ is a finite positive Borel measure on $[0,1)$. The Hankel matrix defined by its elements $\mu_{n,k}=\mu_{n+k}$ for $n,k\ge0$, where $\mu_{n}=\int_{[0,1)}t^nd\mu(t)$, formally represents the Hilbert operator
$$\mathcal{H}_\mu(f)(z)=\sum_{n=0}^\infty\left(\sum_{k=0}^\infty \mu_{n,k}a_k\right)z^n ,  ~z\in \mathbb{D},$$
where $f(z)=\sum_{n=0}^\infty a_nz^n$ is an analytic function in $\mathbb{D}$. Similarly, if $\mu$ is a finite positive Borel measure on $[0,1)$ and $\alpha>0$, we use $\mathcal{H}_{\mu,\alpha}=(\mu_{n,k,\alpha})_{n,k\ge0}$ to denote the Hankel matrix $\left ( \mu_{n,k} \right ) _{n,k\ge 0}$ with entries $\mu_{n,k,\alpha}= \int_{[0,1)}^{}\frac{\Gamma(n+\alpha)}{\Gamma(n+1)\Gamma(\alpha)}t^{n+k}d\mu(t)$. The matrix $\mathcal{H}_{\mu,\alpha}$ can be regarded as an operator on $H(\mathbb{D})$ by its action on the Taylor coefficients:
$$
a_n\to\sum_{k=0}^{\infty}\mu_{n,k,\alpha}a_k,\quad n=0,1,2,\cdots.
$$
That is, for $f\left ( z \right ) = {\textstyle \sum_{n=0}^{\infty}} a_nz^n\in H\left ( \mathbb{D}  \right )$, the generalized Hilbert operator defined as follows:
\begin{align}\label{eqn1.1}
\mathcal{H}_{\mu,\alpha}(f)(z)=\sum_{n=0}^{\infty}\left ( \sum_{k=0}^{\infty}\mu_{n,k,\alpha}a_{k}  \right ) z^n,z\in\mathbb{D},
\end{align}
whenever the right hand side is well defined and defines a function in $H(\mathbb{D})$. The operator $\mathcal{H}_{\mu,\alpha}$ is also called fractional derivative Hilbert operator since it is easy to see that $R^{-1,\alpha}\mathcal{H}_\mu(f)=\mathcal{H}_{\mu,\alpha}(f)$.

The generalized Hilbert operator $\mathcal{H}_{\mu,1}$ has been methodically studied in many different spaces, such as Bergman spaces, Bloch spaces, Hardy spaces(e.g.\cite{7,8,9,10,11,12}). For the case $\alpha=2$, $\mathcal{H}_{\mu,2 }$ called the Derivative-Hilbert operator which has been studied in \cite{xu-ye1,xu-ye2,ye-xu,17,18,15}. In \cite{19, 22, ye-feng, 16}, the operator $\mathcal{H}_{\mu,\alpha}$ were
called the generalized Hilbert operators.

In addition, the operator $\mathcal{H}_{\mu,\alpha }$ is related to the generalized integral-Hilbert operator $\mathcal{I}_{{\mu}_\alpha }(\alpha>0)$ defined by
\begin{align}\label{eqn1.2}
 \mathcal{I}_{\mu,\alpha }\left ( f \right ) \left ( z \right ) =\int_{[0,1)}^{} \frac{f\left ( t \right ) }{\left ( 1-tz \right )^\alpha  }d\mu \left ( t \right ),
\end{align}
whenever the right hind side makes sense and defines an analytic function in $\mathbb{D}$. If $\alpha=1$, then $\mathcal{I}_{{\mu}_\alpha }$ is the integral operator $\mathcal{I}_{{\mu}}$. After that, Ye and Zhou characterized the measures $\mu$ for which $\mathcal{H}_{\mu,2 }= \mathcal{I}_{\mu,2 }$ are bounded (resp., compact) on the Bloch space \cite{15} and on the Bergman spaces \cite{18}. In this article, we can also gain the operators $\mathcal{H}_{\mu,\alpha }$ and $\mathcal{I}_{\mu,\alpha }$ are intricately connected for $\alpha>0$.

Let us review the concept of the Carleson-type measures, which is a useful tool for understanding Banach spaces of analytic functions.

If $I\subset\partial \mathbb{D}$ in an arc, $|I|$ denotes the length of $I$, the Carleson square $S(I)$ is defined as
$$
 S(I)=\left\{z=re^{it}:e^{it}\in I, 1-\frac{|I|}{2\pi}\leq r < 1 \right\}.
$$

Suppose that $\mu$ is a positive Borel measure on $\mathbb{D}$. For $0\leq \beta < \infty$ and $ 0<s< \infty $, we say that $\mu$ is a $\beta$-logarithmic $s$-Carleson measure if there exists a positive constant $C$ such that
 $$\sup_I\frac{\mu(S(I))(\log\frac{ 2\pi }{|I|})^\beta}{|I|^s} \leq C, \quad  \quad I \subset \partial \mathbb{D}.$$

If $\mu(S(I))(\log\frac{ 2\pi }{|I|})^\beta=o(|I|^s)$ as  $|I|\rightarrow 0$, we say that $\mu$ is a vanishing $\beta$-logarithmic $s$-Carleson measure.

  A positive Borel measure on $[0, 1)$  can also be seen as a Borel measure on $\mathbb{D}$ by identifying it with the measure $\mu$ defined by
$$\tilde{\mu}(E)=\mu(E\bigcap [0,1)),$$
for any Borel subset $E$ of $\mathbb{D}$.  Then we say that $\mu$ is a $\beta$-logarithmic $s$-Carleson measure if there exists a positive constant $C$ such that
$$
\mu ([t,1)) \log^\beta\frac{e}{1-t } \le C(1-t)^s,\quad for\ all \ 0\le t<1.
$$

In detail, $\mu$ is a $s$-Carleson measure if $\beta=0$. If $\mu$ satisfies
$$
\lim_{t\to1^-}\frac{\mu ([t,1)) \log^\beta\frac{e}{1-t }}{(1-t)^s} =0,
$$
we say that $\mu$ is a vanishing $\beta$-logarithmic $s$-Carleson measure(see \cite{5,6}).

The essential norm of a continuous linear operator T between two Banach spaces P and Q is defined as the distance from T to the set of compact operators
K. Mathematically, this is expressed as: $\|T\|_{e,P\to Q}=inf\left\{\|T-K\|_{P\to Q}:K \ is\ compact\right\}$, where $\|\cdot\|$ is the operator norm. It is easy to see that $\|T\|_{e,P\to Q}=0$ if and only if T itself is a compact operator. This concept is further studied in references \cite{24,25}.

In this paper, we focus on identifying the positive Borel measure $\mu$ such that $\mathcal{H}_{\mu,\alpha}(\alpha>0)$ is bounded (resp. compact) from $H^p(0<p\le 1)$ into $H^p(1\le q<\infty)$. Additionally, we also do similar work for the operators acting on $H^p(1\le p\le2)$. Subsequently, we determine the Hilbert-Schmidt class on $H^2$ for all $\alpha>0$. Ultimately, we identify the essential norm of $\mathcal{H}_{\mu,\alpha}$ from $H^p(0<p\le1)$ into $H^p(1\le q<\infty)$.

Throughout this work,the symbol $C$ represents an absolute constant that depends only on the parameters specified in parentheses, though it may vary between different instances. We employ the notation "$J\lesssim K$" if there exists a constant $C=C(\cdot)$ such that $J\le CK$ and $J\gtrsim K$ is interpreted in a comparable fashion. For any given $q>1$, ${q}'$ is used to denote the conjugate index of $q$, which satisfies the equation $1/q+1/{q}'=1$.

\section{Preliminary results}
\begin{lemma}\label{Th1.1}\cite{22}
Suppose that  $0<p<\infty$ and $\alpha>0$, let $\mu$ be a positive Borel measure on $[0,1)$. Then for every $f\in H^p$, (\ref{eqn1.1}) is a defined analytic function in $\mathbb{D}$ in either of the two following cases:

(i) If $0<p\le1$, $\mu$ is a $\frac{1}{p}$-Carleson measure;

(ii) If $1<p<\infty$, $\mu$ is a $1$-Carleson measure. 

Moreover, in such as cases we obtain that
$$
\mathcal{H}_{\mu,\alpha}(f)(z)=\int_{[0,1)}^{}\frac{f(t)}{(1-tz)^\alpha}d\mu(t)=\mathcal{I}_{\mu,\alpha}(f)(z).
$$
\end{lemma}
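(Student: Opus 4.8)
The plan is to reduce the statement to two classical facts and let Fubini's theorem do the rest. The first fact is a moment estimate for Carleson-type measures: if $0<s<\infty$ and $\mu$ is an $s$-Carleson measure on $[0,1)$, then the moments $\mu_m=\int_{[0,1)}t^m\,d\mu(t)$ satisfy $\mu_m\lesssim (m+1)^{-s}$. I would prove this from the identity $\mu_m=m\int_0^1 t^{m-1}\mu([t,1))\,dt$ (apply Fubini to $t^m=\int_0^t m u^{m-1}\,du$), insert $\mu([t,1))\lesssim(1-t)^s$, and recognise the resulting Beta integral $m\,B(m,s+1)=\Gamma(s+1)\Gamma(m+1)/\Gamma(m+s+1)$, which is comparable to $(m+1)^{-s}$ by Stirling's formula. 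This is used with $s=1/p$ in case (i) and with $s=1$ in case (ii). The second fact is a Hardy-type inequality for Taylor coefficients: setting $s=1/p$ when $0<p\le1$ and $s=1$ when $1\le p<\infty$, every $f=\sum_{k\ge0}a_kz^k\in H^p$ satisfies
$$\sum_{k=0}^{\infty}\frac{|a_k|}{(k+1)^{s}}\lesssim\|f\|_{H^p}.$$
For $p\ge1$ this is the classical Hardy inequality combined with the inclusion $H^p\hookrightarrow H^1$; for $0<p\le1$ it follows by composing the Hardy--Littlewood--Flett fractional-integration theorem (integration of order $1/p-1$ maps $H^p$ boundedly into $H^1$) with Hardy's inequality on $H^1$.

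Granting these, the argument is short. For the relevant value of $s$ the moment estimate gives $\mu_{n+k}\lesssim (n+k+1)^{-s}\le (k+1)^{-s}$, with a constant depending only on $\mu$ and $s$, not on $n$. Hence, using the Hardy-type inequality and the asymptotics $\Gamma(n+\alpha)/(\Gamma(n+1)\Gamma(\alpha))\lesssim (n+1)^{\alpha-1}$,
$$\sum_{k=0}^{\infty}|\mu_{n,k,\alpha}a_k|=\frac{\Gamma(n+\alpha)}{\Gamma(n+1)\Gamma(\alpha)}\sum_{k=0}^{\infty}\mu_{n+k}|a_k|\lesssim (n+1)^{\alpha-1}\|f\|_{H^p}.$$
Thus each inner sum in (\ref{eqn1.1}) converges absolutely with value $O((n+1)^{\alpha-1})$, so the outer power series has radius of convergence at least $1$ and (\ref{eqn1.1}) indeed defines a holomorphic function in $\mathbb{D}$.

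For the integral representation, I would fix $z$ with $|z|=r<1$. From $|f(t)|\le\sum_k|a_k|t^k$ and $|1-tz|^{-\alpha}\le(1-tr)^{-\alpha}=\sum_n\frac{\Gamma(n+\alpha)}{\Gamma(n+1)\Gamma(\alpha)}(tr)^n$, Tonelli's theorem yields
$$\int_{[0,1)}\frac{|f(t)|}{|1-tz|^{\alpha}}\,d\mu(t)\le\sum_{n=0}^{\infty}\frac{\Gamma(n+\alpha)}{\Gamma(n+1)\Gamma(\alpha)}r^{n}\sum_{k=0}^{\infty}\mu_{n+k}|a_k|\lesssim\frac{\|f\|_{H^p}}{(1-r)^{\alpha}}<\infty,$$
so $\mathcal{I}_{\mu,\alpha}(f)(z)$ is well defined, and this same bound licenses Fubini's theorem in
$$\int_{[0,1)}\frac{f(t)}{(1-tz)^{\alpha}}\,d\mu(t)=\sum_{n=0}^{\infty}\left(\frac{\Gamma(n+\alpha)}{\Gamma(n+1)\Gamma(\alpha)}\sum_{k=0}^{\infty}a_k\mu_{n+k}\right)z^{n}=\sum_{n=0}^{\infty}\left(\sum_{k=0}^{\infty}\mu_{n,k,\alpha}a_k\right)z^{n}=\mathcal{H}_{\mu,\alpha}(f)(z).$$
Holomorphy of $\mathcal{I}_{\mu,\alpha}(f)$ in $\mathbb{D}$ is then immediate, either from this identity or directly from Morera's theorem together with the local uniform bound just displayed.

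The single non-routine step is the Hardy-type coefficient inequality in the range $0<p\le1$: the crude pointwise estimates $|a_k|\lesssim\|f\|_{H^p}(k+1)^{1/p-1}$ and $\mu_{n+k}\lesssim (n+k)^{-1/p}$ are each essentially sharp, yet their product leads to the divergent series $\sum_k (n+k)^{-1/p}(k+1)^{1/p-1}$, so a genuinely global bound on $\sum_k|a_k|(k+1)^{-1/p}$ is unavoidable; isolating and applying the correct fractional-integration-plus-Hardy inequality is therefore the heart of the matter. Everything else reduces to bookkeeping with the Gamma function and two applications of Fubini--Tonelli.
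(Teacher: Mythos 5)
The paper offers no proof of this lemma --- it is imported verbatim from \cite{22} --- so there is no in-paper argument to measure you against; I can only assess your proposal on its own terms, and it is correct. It is also essentially the standard argument for statements of this type (compare the $\alpha=1$ cases in \cite{9} and \cite{7}): the moment decay $\mu_m\lesssim (m+1)^{-s}$ for an $s$-Carleson measure via the distribution-function/Beta-integral computation, the coefficient inequality $\sum_k |a_k|(k+1)^{-s}\lesssim\|f\|_{H^p}$ with $s=1/p$ (resp.\ $s=1$), and two applications of Fubini--Tonelli, all of which you execute correctly. You are right to single out the coefficient inequality in the range $0<p<1$ as the only nontrivial ingredient: the pointwise bound $|a_k|\lesssim (k+1)^{1/p-1}\|f\|_{H^p}$ of Lemma \ref{lm2.6} really is insufficient (the resulting majorant behaves like the harmonic series), and passing through Hardy--Littlewood fractional integration of order $1/p-1$ into $H^1$ followed by Hardy's inequality is a legitimate way to close that gap. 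One observation worth recording: elsewhere the paper controls $\mathcal{I}_{\mu,\alpha}(f)$ via the Carleson embedding $\int_{[0,1)}|f(t)|\,d\mu(t)\lesssim\|f\|_{H^p}$ (its (\ref{eqn3.1})), which gives convergence of the integral at once; but that shortcut does not by itself yield absolute convergence of the inner sums $\sum_k\mu_{n,k,\alpha}a_k$, since Fubini would then require integrating $\sum_k|a_k|t^k$ rather than $|f(t)|$ against $d\mu$, and the former function need not lie in $H^p$. Your coefficient-based route handles the series side as well as the integral side, which is exactly what the lemma asserts.
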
  

\begin{lemma}\label{Lm1.2}\cite{13} 
Let $\gamma>0$ and $f\in H(\mathbb{D})$. If there exists a real parameter $\beta$ such that neither $1+\beta$ nor $1+\beta+\gamma$ is a negative integer, then the following statements are equivalent.

(i)$f\in\mathcal{B};$

(ii) The function $(1-|z|^2)^\gamma R^{\beta,\gamma}f(z)$ is bounded in $\mathbb{D}$.

Furthermore,
$$
\|f\|_\mathcal{B}\asymp|f(0)|+\sup\limits_{z\in\mathbb{D}}(1-|z|^2)^\gamma\left|R^{\beta,\gamma}f(z)\right|.
$$
\end{lemma}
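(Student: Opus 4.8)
The plan is to exploit the elementary calculus of the fractional operators $R^{\beta,\gamma}$ and reduce everything to two sharp one–dimensional integral estimates. First I would record the facts about $R^{\beta,\gamma}$ the proof needs. Each $R^{\beta,\gamma}$ acts diagonally on Taylor coefficients with multiplier $\lambda_n(\beta,\gamma)=\frac{\Gamma(2+\beta)\Gamma(n+2+\beta+\gamma)}{\Gamma(2+\beta+\gamma)\Gamma(n+2+\beta)}$; hence the family commutes, and a check on monomials gives the inversion $R^{\beta,\gamma}R_{\beta,\gamma}=I$ (with $R_{\beta,\gamma}$ the inverse multiplier), the semigroup law $R^{\beta,\gamma_1}R^{\beta+\gamma_1,\gamma_2}=R^{\beta,\gamma_1+\gamma_2}$, and, combining these with commutativity, $R^{\beta,\gamma_1}=R_{\beta+\gamma_1,\gamma_2}R^{\beta,\gamma_1+\gamma_2}$. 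When $\gamma>0$ and $\beta>-2$, Euler's Beta integral yields the one–dimensional representation
$$
R_{\beta,\gamma}g(z)=\frac{\Gamma(2+\beta+\gamma)}{\Gamma(2+\beta)\Gamma(\gamma)}\int_0^1 g(tz)\,(1-t)^{\gamma-1}t^{1+\beta}\,dt ,
$$
and when $\gamma=m$ is a positive integer $\lambda_n(\beta,m)$ is a polynomial of degree $m$ in $n$, so $R^{\beta,m}f=\sum_{j=0}^{m}c_j(\beta,m)\,z^{j}f^{(j)}(z)$ is a differential operator of order $m$ with monomial coefficients. Apart from these identities I would use only the asymptotics $\int_0^1(1-t)^{a-1}(1-tr)^{-a-c}\,dt\asymp(1-r)^{-c}$ as $r\to1^-$ for $a,c>0$, together with the standard Bloch growth bounds $|f^{(j)}(z)|\lesssim\|f\|_{\mathcal{B}}(1-|z|^2)^{-j}$ $(j\ge1)$, $|f(z)|\lesssim\|f\|_{\mathcal{B}}\log\frac{2}{1-|z|^2}$, and the Cauchy estimate $|h'(w)|\lesssim\|h\|_{A^{-\gamma}}(1-|w|^2)^{-\gamma-1}$ for $h$ in the growth space $A^{-\gamma}=\{h\in H(\mathbb{D}):\sup_z(1-|z|^2)^{\gamma}|h(z)|<\infty\}$.

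For the implication (i) $\Rightarrow$ (ii) I would first treat an integer exponent $\gamma=m$: writing $R^{\beta,m}f=\sum_{j=0}^{m}c_j z^{j}f^{(j)}$ and invoking the growth bounds, each summand obeys $(1-|z|^2)^{m}|z^{j}f^{(j)}(z)|\lesssim\|f\|_{\mathcal{B}}$ — for $1\le j\le m$ because $(1-|z|^2)^{m-j}\le1$, and for $j=0$ because $(1-|z|^2)^{m}\log\frac{2}{1-|z|^2}$ is bounded on $\mathbb{D}$ — so $(1-|z|^2)^{m}|R^{\beta,m}f(z)|\lesssim\|f\|_{\mathcal{B}}$. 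For a general $\gamma>0$ I would pick an integer $m>\gamma$ and use $R^{\beta,\gamma}=R_{\beta+\gamma,\,m-\gamma}R^{\beta,m}$; feeding $|R^{\beta,m}f(tz)|\lesssim\|f\|_{\mathcal{B}}(1-t|z|)^{-m}$ into the one–dimensional representation gives
$$
(1-|z|^2)^{\gamma}\bigl|R^{\beta,\gamma}f(z)\bigr|\lesssim\|f\|_{\mathcal{B}}\,(1-|z|^2)^{\gamma}\int_0^1(1-t)^{m-\gamma-1}(1-t|z|)^{-m}\,dt\asymp\|f\|_{\mathcal{B}},
$$
by the stated asymptotics with $a=m-\gamma$ and $c=\gamma$ (the power of $t$ in the kernel being immaterial). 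Since $R^{\beta,\gamma}f(0)=f(0)$, this simultaneously bounds $|f(0)|+\sup_z(1-|z|^2)^{\gamma}|R^{\beta,\gamma}f(z)|$ by a constant times $\|f\|_{\mathcal{B}}$.

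For (ii) $\Rightarrow$ (i), I would set $g=R^{\beta,\gamma}f$ and $M=\sup_z(1-|z|^2)^{\gamma}|g(z)|<\infty$, so $g\in A^{-\gamma}$ and $|g'(w)|\lesssim M(1-|w|^2)^{-\gamma-1}$. From $f=R_{\beta,\gamma}g$ and the one–dimensional representation, differentiation under the integral sign yields $f'(z)=c_{\beta,\gamma}\int_0^1 g'(tz)\,t^{2+\beta}(1-t)^{\gamma-1}\,dt$, whence, since $t^{2+\beta}\le1$ and $1-t^2|z|^2\ge1-t|z|$,
$$
(1-|z|^2)\,|f'(z)|\lesssim M\,(1-|z|^2)\int_0^1(1-t)^{\gamma-1}(1-t|z|)^{-\gamma-1}\,dt\asymp M
$$
by the asymptotics with $a=\gamma$, $c=1$. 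As $f(0)=g(0)$ and $|g(0)|\le M$, we get $\|f\|_{\mathcal{B}}=|f(0)|+\sup_z(1-|z|^2)|f'(z)|\lesssim M$, so $f\in\mathcal{B}$ and $\|f\|_{\mathcal{B}}\lesssim|f(0)|+\sup_z(1-|z|^2)^{\gamma}|R^{\beta,\gamma}f(z)|$; with the previous paragraph this gives the equivalence and the norm comparison for every admissible $\beta>-2$. The remaining admissible values ($\beta<-2$ with $1+\beta$ not a negative integer) reduce to this case: fixing $\beta_0>-2$ and writing $R^{\beta,\gamma}=(R^{\beta,\gamma}R_{\beta_0,\gamma})R^{\beta_0,\gamma}$, the diagonal operator $R^{\beta,\gamma}R_{\beta_0,\gamma}$ has multiplier $\lambda_n(\beta,\gamma)/\lambda_n(\beta_0,\gamma)$, which by the asymptotic expansion of ratios of Gamma functions tends to a nonzero limit and stays bounded above and below, hence is bounded with bounded inverse on $A^{-\gamma}$.

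I expect the only genuinely delicate point to be the two sharp one–dimensional integral asymptotics: these are what pin the exponent $(1-|z|^2)^{\gamma}$ down exactly, and they also explain why the logarithmic growth of a Bloch function never enters the estimate — in the forward direction this is arranged by routing through the honest differential operator $R^{\beta,m}$, and in the backward direction by observing that only the single derivative $f'$ must be recovered, against which a fractional integral of positive order supplies more than enough smoothing. The remaining ingredients — the monomial identities for $R^{\beta,\gamma}$, the legitimacy of the integral representation, and the interchange of differentiation with the integral — are routine provided the admissibility hypotheses on $\beta$ and $\gamma$ are kept in force throughout.
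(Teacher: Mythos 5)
The paper offers no proof of this lemma --- it is imported wholesale from Zhu's book \cite{13} --- so the only meaningful comparison is with the standard argument there, which your proposal essentially reconstructs: the same semigroup identities for $R^{\beta,\gamma}$, the same Euler-integral representation of the inverse operator $R_{\beta,\gamma}$, and the same two one-dimensional estimates $\int_0^1(1-t)^{a-1}(1-tr)^{-a-c}\,dt\asymp(1-r)^{-c}$. For the range $\beta>-2$ your argument is complete and correct: the forward direction via the honest differential operator $R^{\beta,m}$ followed by the fractional integral $R_{\beta+\gamma,m-\gamma}$ (note $\beta+\gamma>-2$ so the kernel $t^{1+\beta+\gamma}$ is integrable), and the backward direction via $f'=c_{\beta,\gamma}\int_0^1 g'(tz)t^{2+\beta}(1-t)^{\gamma-1}dt$ with the Cauchy estimate on $A^{-\gamma}$, both check out, and this already covers every use the paper makes of the lemma (it is only ever invoked with $\beta=-1$, $\gamma=\alpha-1$).

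The one genuine gap is the final reduction of the remaining admissible $\beta$ (those with $\beta\le-2$ but $1+\beta$ not a negative integer) to a fixed $\beta_0>-2$. You factor $R^{\beta,\gamma}=(R^{\beta,\gamma}R_{\beta_0,\gamma})R^{\beta_0,\gamma}$ and conclude that the transfer operator is bounded with bounded inverse on $A^{-\gamma}$ \emph{because} its multiplier sequence $\lambda_n(\beta,\gamma)/\lambda_n(\beta_0,\gamma)$ tends to a nonzero limit and is bounded above and below. That inference is false in general: a coefficient multiplier with a bounded, bounded-below symbol need not act boundedly on $A^{-\gamma}$ (think of a lacunary projection, whose symbol is $\{0,1\}$-valued, or a symbol of random signs). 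What actually makes the transfer work is the full asymptotic expansion $\lambda_n(\beta,\gamma)/\lambda_n(\beta_0,\gamma)=c\,(1+c_1n^{-1}+c_2n^{-2}+\cdots)$ of the Gamma-function ratio, i.e.\ the \emph{smoothness} of the symbol, from which boundedness on growth spaces follows by a kernel estimate (or, equivalently, by Zhu's lemma that such diagonal operators are bounded and invertible on $A^{-\gamma}$). You mention the asymptotic expansion but then extract from it only the boundedness of the sequence, which is not enough; either supply the symbol-smoothness argument, or restrict the statement to $\beta>-2$, which is all this paper needs.
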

\begin{lemma}\label{Lm1.3}\cite{23}
Let $\tau$ be real. Then the integral
$$
G(a)=\int_{0}^{2\pi}\frac{d\theta}{\left|1-\bar{a} e^{i\theta}\right|^{1+\tau}}
$$
have the following properties.

(i) If $\tau>0$, then $G(a)\asymp\frac{1}{(1-|a|^2)^\tau};$

(ii) If $\tau=0$, then $G(a)\asymp\log\frac{e}{1-|a|^2};$

(iii) If $\tau<0$, then $G(a)\asymp1.$
\end{lemma}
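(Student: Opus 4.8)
The plan is to reduce the whole statement to the asymptotic behaviour of a single power series in $|a|$. Since $G(a)$ depends only on $|a|$ (replace $\theta$ by $\theta+\arg a$), we may assume $a=r\in[0,1)$. If $1+\tau\le0$ the integrand is a bounded continuous function of $\theta$ and $G(a)\asymp1$, which is case (iii); so assume $1+\tau>0$ and put $s=\frac{1+\tau}{2}>0$. The binomial expansion gives $(1-re^{i\theta})^{-s}=\sum_{n\ge0}c_nr^ne^{in\theta}$ with $c_n=\frac{\Gamma(n+s)}{\Gamma(n+1)\Gamma(s)}$, hence $|1-re^{i\theta}|^{-(1+\tau)}=\bigl|\sum_{n\ge0}c_nr^ne^{in\theta}\bigr|^2$, and integrating term by term (Parseval) yields
$$G(a)=2\pi\sum_{n=0}^{\infty}c_n^2r^{2n}.$$

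Next I would control the coefficients. By Stirling's formula $\frac{\Gamma(n+s)}{\Gamma(n+1)}\asymp(n+1)^{s-1}$, so $c_n^2\asymp(n+1)^{2s-2}=(n+1)^{\tau-1}$ with constants depending only on $\tau$, whence
$$G(a)\asymp\sum_{n=0}^{\infty}(n+1)^{\tau-1}x^n,\qquad x=r^2=|a|^2.$$
The three cases now follow from standard estimates for this series as $x\to1^-$: for $\tau>0$ one has $\sum_n(n+1)^{\tau-1}x^n\asymp(1-x)^{-\tau}$; for $\tau=0$, $\sum_n(n+1)^{-1}x^n=\frac1x\log\frac{1}{1-x}\asymp\log\frac{e}{1-x}$; and for $-1<\tau<0$ the exponent $\tau-1<-1$ makes the series converge, bounded above by its value at $x=1$ and below by its constant term, so $G(a)\asymp1$. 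Since $1-x=1-|a|^2$, this gives precisely (i)--(iii).

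To make the first estimate rigorous I would either split $\sum_n(n+1)^{\tau-1}x^n$ into dyadic blocks $2^k\le n<2^{k+1}$ --- on which $x^n\asymp1$ for $2^k\lesssim(1-x)^{-1}$ and $x^n$ decays geometrically afterwards --- and sum the resulting geometric-type series in $k$, or compare the sum with $\int_0^\infty t^{\tau-1}x^t\,dt=\Gamma(\tau)(\log\tfrac1x)^{-\tau}$ together with $\log\frac1x\asymp1-x$ for $x$ near $1$; for small $|a|$ both $G(a)$ and each right-hand side are bounded between positive constants, so the equivalences extend to all of $\mathbb{D}$. The only point requiring genuine care is uniformity: one must verify that the implied constants in $c_n^2\asymp(n+1)^{\tau-1}$ and in the series bound depend on $\tau$ alone. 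As a remark I would note the alternative purely geometric argument, using $|1-\bar ae^{i\theta}|\asymp(1-|a|)+|\theta-\arg a|$ on an interval of length $2\pi$ about $\arg a$ and decomposing it into the central arc of length $1-|a|$ and dyadic annuli, which delivers all three cases at once.
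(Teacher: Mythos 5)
The paper gives no proof of this lemma: it is quoted directly from Rudin \cite{23} (Proposition 1.4.10 there, specialized to dimension one). Your argument is correct and is essentially the standard proof from that reference: reduce to $a=r\in[0,1)$ by rotation invariance, write $|1-re^{i\theta}|^{-(1+\tau)}=|(1-re^{i\theta})^{-(1+\tau)/2}|^{2}$, apply Parseval to get $G(a)=2\pi\sum_{n}c_{n}^{2}r^{2n}$, and estimate $\sum_{n}(n+1)^{\tau-1}x^{n}$ in the three regimes; the only step you leave implicit is the lower bound $G(a)\gtrsim1$ when $1+\tau\le0$, which follows since $|1-re^{i\theta}|\ge1$ on half of the circle.
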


\begin{lemma}\label{lm2.5}
Let $0<\alpha<\infty$ and $\mathcal{H}_{\mu,\alpha }$ is a bounded operator from $H^p$ into $H^q$. Then $\mathcal{H}_{\mu,\alpha }$ is a compact operator from $H^p$ into $H^q$ if and only if for any bounded sequence $\left \{ f_n \right \} $ in $H^p$ which converges to $0$ uniformly on every compact subset of $\mathbb{D}$, we have $\lim_{n\to\infty}\|\mathcal{H}_{\mu,\alpha }\|_{H^q}=0$.
\end{lemma}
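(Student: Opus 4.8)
The plan is to prove the standard "compactness $\Leftrightarrow$ sequential criterion" characterization by verifying both implications directly, using Lemma~\ref{Th1.1} to realize $\mathcal{H}_{\mu,\alpha}$ as the integral operator $\mathcal{I}_{\mu,\alpha}$ whenever convenient. Note first that boundedness of $\mathcal{H}_{\mu,\alpha}: H^p \to H^q$ forces $\mu$ to be a $\frac1p$-Carleson measure (for $0<p\le 1$) or a $1$-Carleson measure (for $p>1$), so by Lemma~\ref{Th1.1} the operator admits the integral representation $\mathcal{H}_{\mu,\alpha}(f)(z)=\int_{[0,1)}\frac{f(t)}{(1-tz)^\alpha}\,d\mu(t)$; I would record this at the outset since it makes the convergence arguments transparent.

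For the forward direction, suppose $\mathcal{H}_{\mu,\alpha}$ is compact, and let $\{f_n\}$ be bounded in $H^p$ with $f_n \to 0$ uniformly on compact subsets of $\mathbb{D}$. Assume for contradiction that $\|\mathcal{H}_{\mu,\alpha}(f_n)\|_{H^q} \not\to 0$; passing to a subsequence we may assume $\|\mathcal{H}_{\mu,\alpha}(f_n)\|_{H^q}\ge \delta>0$ for all $n$. By compactness, a further subsequence satisfies $\mathcal{H}_{\mu,\alpha}(f_n)\to g$ in $H^q$ for some $g\in H^q$. On the other hand, for each fixed $z\in\mathbb{D}$ the point evaluation is bounded on $H^q$, so $\mathcal{H}_{\mu,\alpha}(f_n)(z)\to g(z)$; but from the integral representation and $f_n\to 0$ uniformly on the compact set $[0,1-\epsilon]\supset \operatorname{supp}\mu$ restricted appropriately (together with the Carleson bound controlling the tail near $1$), one shows $\mathcal{H}_{\mu,\alpha}(f_n)(z)\to 0$ pointwise. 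Hence $g\equiv 0$, contradicting $\|\mathcal{H}_{\mu,\alpha}(f_n)\|_{H^q}\ge\delta$. The main obstacle here is the pointwise convergence step: near $t=1$ the integrand is not uniformly small, so one must split $\int_{[0,1)} = \int_{[0,r)} + \int_{[r,1)}$, use uniform convergence on $[0,r)$ for the first piece, and for the second piece estimate $|f_n(t)|\lesssim (1-t)^{-1/p}\|f_n\|_{H^p}$ together with the $\frac1p$-Carleson (resp. $1$-Carleson) property of $\mu$ to make the tail uniformly small in $n$ by choosing $r$ close to $1$.

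For the converse, assume the sequential condition holds and suppose $\mathcal{H}_{\mu,\alpha}$ is not compact. Then there is a sequence $\{g_n\}$ in the unit ball of $H^p$ such that $\{\mathcal{H}_{\mu,\alpha}(g_n)\}$ has no convergent subsequence in $H^q$. Since $\{g_n\}$ is norm-bounded in $H^p$, it is a normal family, so after passing to a subsequence $g_n \to g$ uniformly on compact subsets of $\mathbb{D}$ for some $g\in H^p$ with $\|g\|_{H^p}\le 1$; then $f_n := g_n - g$ is bounded in $H^p$ and converges to $0$ uniformly on compacta, so by hypothesis $\|\mathcal{H}_{\mu,\alpha}(f_n)\|_{H^q}\to 0$, i.e. $\mathcal{H}_{\mu,\alpha}(g_n)\to \mathcal{H}_{\mu,\alpha}(g)$ in $H^q$ — a convergent subsequence, contradiction. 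This direction is essentially soft once normality of bounded subsets of $H^p$ and the linearity and boundedness of $\mathcal{H}_{\mu,\alpha}$ are invoked; the only care needed is that the limit function $g$ indeed lies in $H^p$ with controlled norm, which follows from Fatou's lemma applied to $M_p(r,\cdot)$. Finally I would remark that the statement as printed contains an evident typo — "$\lim_{n\to\infty}\|\mathcal{H}_{\mu,\alpha}\|_{H^q}=0$" should read "$\lim_{n\to\infty}\|\mathcal{H}_{\mu,\alpha}(f_n)\|_{H^q}=0$" — and proceed under the corrected reading.
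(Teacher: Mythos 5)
The paper does not actually write out a proof of this lemma; it simply cites \cite[Proposition 3.11]{21} and omits the details. Your converse direction is correct and is exactly the standard argument that reference uses: normality of bounded subsets of $H^p$, Fatou's lemma to place the locally uniform limit $g$ in $H^p$, and linearity of the operator. Your observation about the typo in the statement is also correct.

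The forward direction, however, contains a genuine gap at precisely the step you flag as ``the main obstacle.'' You need $\mathcal{H}_{\mu,\alpha}(f_n)(z)\to 0$ pointwise, and you propose to control the tail by
$\int_{[r,1)}\frac{|f_n(t)|}{|1-tz|^{\alpha}}\,d\mu(t)\lesssim \frac{\|f_n\|_{H^p}}{(1-|z|)^{\alpha}}\int_{[r,1)}(1-t)^{-1/p}\,d\mu(t)$,
but for a measure that is merely a $\frac1p$-Carleson measure this last integral need not be small --- it need not even be finite. Take $d\mu(t)=(1-t)^{1/p-1}\,dt$, which satisfies $\mu([s,1))=p(1-s)^{1/p}$ and is therefore a $\frac1p$-Carleson measure, yet $\int_{[r,1)}(1-t)^{-1/p}\,d\mu(t)=\int_r^1(1-t)^{-1}\,dt=+\infty$. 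Worse, the desired conclusion ``$f_n$ bounded in $H^p$ and $f_n\to 0$ on compacta $\Rightarrow \int_{[0,1)}|f_n|\,d\mu\to 0$'' is equivalent to compactness of the embedding $H^p\hookrightarrow L^1(\mu)$, i.e.\ to $\mu$ being a \emph{vanishing} $\frac1p$-Carleson measure; for the test functions $f_{a_n}$ of (\ref{eqn-abc}) and a non-vanishing Carleson measure one has $\int f_{a_n}\,d\mu\asymp \mu([a_n,1))/(1-a_n)^{1/p}\not\to 0$, so the pointwise convergence cannot be derived from the Carleson bound alone. (Also, the phrase ``the compact set $[0,1-\epsilon]\supset\operatorname{supp}\mu$'' is not meaningful, since $\operatorname{supp}\mu$ typically accumulates at $1$.) This is exactly where the present situation differs from the composition-operator setting of \cite[Proposition 3.11]{21}, where $C_\varphi f_n=f_n\circ\varphi\to 0$ on compacta is immediate; for $\mathcal{H}_{\mu,\alpha}$ the identification of the norm limit of $\mathcal{H}_{\mu,\alpha}(f_{n_k})$ as $0$ requires a separate argument (or an additional standing assumption on $\mu$), and as written your proof does not supply one. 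Note also that invoking the paper's later compactness characterizations to upgrade $\mu$ to a vanishing Carleson measure would be circular, since those theorems are proved using this lemma.
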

\par
The argument can be proved just as in \cite[Proposition 3.11]{21}, we omit the details.

\begin{lemma}\label{lm2.6}\cite[Theorem 6.4]{3}
If $g(z)=\sum_{n=0}^{\infty}b_nz^n\in H^p,0<p\le1$, then
$$b_n=o(n^{1/p-1})$$
and
$$|b_n|\lesssim n^{1/p-1}\|g\|_{H^p}.$$
\end{lemma}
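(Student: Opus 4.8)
The plan is to derive the coefficient bounds from the classical growth estimate for the integral means of an $H^p$ function together with a Cauchy-type estimate, and then to obtain the little-oh refinement by a dilation/approximation argument.

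First I would recall that for $g\in H^p$ with $0<p\le 1$ the function $|g|^p$ is subharmonic and $g$ has a nontangential boundary function $g^\ast$ with $\|g\|_{H^p}^p=\frac{1}{2\pi}\int_0^{2\pi}|g^\ast(e^{i\theta})|^p\,d\theta$; hence $|g(z)|^p$ is dominated by the Poisson integral of $|g^\ast|^p$, which yields the standard bound $M_\infty(r,g)\le C(1-r)^{-1/p}\|g\|_{H^p}$ for $0\le r<1$. Combining this with the elementary inequality $M_1(r,g)=\frac{1}{2\pi}\int_0^{2\pi}|g(re^{i\theta})|^{1-p}\,|g(re^{i\theta})|^p\,d\theta\le M_\infty(r,g)^{1-p}M_p(r,g)^p$ gives $M_1(r,g)\le C(1-r)^{1-1/p}\|g\|_{H^p}$.

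Next, writing $b_n=\frac{1}{2\pi r^n}\int_0^{2\pi}g(re^{i\theta})e^{-in\theta}\,d\theta$ and taking absolute values yields $|b_n|\le r^{-n}M_1(r,g)\le C\,r^{-n}(1-r)^{1-1/p}\|g\|_{H^p}$ for every $r\in(0,1)$. Choosing $r=1-\frac1n$, so that $r^{-n}$ stays bounded, produces $|b_n|\lesssim n^{1/p-1}\|g\|_{H^p}$, which is the asserted norm estimate.

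Finally, for $b_n=o(n^{1/p-1})$: given $\varepsilon>0$, use that the dilations $g_\rho(z)=g(\rho z)$ converge to $g$ in $H^p$ as $\rho\to1^-$ (a standard fact for $0<p\le1$), and fix $\rho$ with $\|g-g_\rho\|_{H^p}<\varepsilon$. Since $g_\rho$ extends holomorphically across $\partial\mathbb{D}$, its Taylor coefficients $\rho^n b_n$ decay geometrically, so $n^{1-1/p}\rho^n b_n\to0$; applying the norm estimate just proved to $g-g_\rho$, whose $n$-th coefficient is $(1-\rho^n)b_n$, gives $n^{1-1/p}(1-\rho^n)|b_n|\lesssim\varepsilon$. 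Writing $n^{1-1/p}|b_n|=n^{1-1/p}(1-\rho^n)|b_n|+n^{1-1/p}\rho^n|b_n|$ and letting $n\to\infty$ then $\varepsilon\to0$ completes the argument. The only slightly delicate ingredients are the Poisson majorant for $|g|^p$ and the density of dilations in $H^p$ for $p\le1$, both of which are classical; the rest is direct computation, so I expect no real obstacle here.
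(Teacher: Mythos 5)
Your argument is correct and is essentially the classical proof of this result; the paper itself offers no proof, simply citing Duren's Theorem 6.4, and your route (the growth estimate $M_\infty(r,g)\lesssim(1-r)^{-1/p}\|g\|_{H^p}$, the interpolation $M_1\le M_\infty^{1-p}M_p^{p}$, Cauchy's estimate with $r=1-1/n$, and the dilation argument for the $o(\cdot)$ refinement) is precisely the standard one found there. No gaps: the Poisson majorant for $|g|^p$ and the convergence $\|g-g_\rho\|_{H^p}\to0$ are both classical for $0<p\le1$, and the decomposition $1=(1-\rho^n)+\rho^n$ closes the little-oh step correctly.
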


\begin{lemma}\label{lm2.7}\cite[Theorem 6.2]{3}
If $g(z)=\sum_{n=0}^{\infty}b_nz^n\in H^p,0<p\le2$, then

$$\left\{\sum_{n=0}^{\infty}(n+1)^{p-2}|b_n|^p\right\}^{1/p}\lesssim\|g\|_{H^p}.$$

\end{lemma}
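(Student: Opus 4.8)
The plan is to deduce this from the Hausdorff--Young inequality together with a duality/interpolation argument, which is the standard route to the Hardy--Littlewood coefficient theorem. First I would record the two endpoint cases. For $p=2$ the statement is just Parseval's identity: $\sum_{n\ge 0}|b_n|^2 = \|g\|_{H^2}^2$, so the claimed inequality holds with constant $1$. For $p=1$ one uses that $g\in H^1$ implies $g$ has an absolutely convergent-in-mean boundary function, and the classical Hardy inequality $\sum_{n\ge 0}\frac{|b_n|}{n+1}\lesssim \|g\|_{H^1}$ gives exactly the $p=1$ instance of the asserted bound $\sum_{n\ge0}(n+1)^{-1}|b_n|\lesssim\|g\|_{H^1}$.

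Next I would interpolate between these endpoints. The cleanest device is to view the coefficient map $T\colon g\mapsto (b_n)_{n\ge 0}$ as acting into a weighted sequence space. One checks that $T$ is bounded $H^1\to \ell^1(w_1)$ with weight $w_1(n)=(n+1)^{-1}$ and bounded $H^2\to\ell^2(w_2)$ with weight $w_2(n)=(n+1)^0=1$; the target spaces are chosen so that complex interpolation of the weighted $\ell^p$ spaces with parameter $\theta$ determined by $\tfrac1p=\tfrac{1-\theta}{1}+\tfrac{\theta}{2}$ yields the weight $(n+1)^{p-2}$ in $\ell^p$. Since $H^p$ for $1\le p\le 2$ arises (up to equivalence of norms) as the corresponding complex interpolation space between $H^1$ and $H^2$ --- this is classical, via the $M$-harmonic/Riesz factorization description of $H^p$ --- the Riesz--Thorin-type interpolation theorem for analytic families gives boundedness of $T\colon H^p\to\ell^p(w)$ with $w(n)=(n+1)^{p-2}$, which is precisely the claimed estimate. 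I would present this succinctly, citing \cite{3} for both endpoint inequalities and for the interpolation of Hardy spaces, rather than reproving them.

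Alternatively, and perhaps more in keeping with the elementary tone of this preliminary section, one can avoid interpolation entirely and argue directly: write $g=g_1 g_2$ by Riesz factorization with $g_1,g_2\in H^{2p/(2-p)}$... --- but the factorization exponents do not combine cleanly here, so I would instead quote the result outright from \cite[Theorem 6.2]{3} as the excerpt already does. The main obstacle, if one insists on a self-contained proof, is justifying that $H^p$ is the complex interpolation space $(H^1,H^2)_\theta$ with equivalent norms; this requires either the $L^p$ interpolation of Hardy spaces on the circle (via the Riesz projection being bounded on $L^p$, $1<p<\infty$, together with an $H^1$ endpoint argument using the atomic or maximal-function characterization) or a direct Blaschke-product/subharmonicity argument. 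Since the paper only needs the stated inequality as a black-box tool, I would simply invoke \cite{3} and move on.
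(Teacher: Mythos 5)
The paper does not prove this lemma at all: it is quoted verbatim as \cite[Theorem 6.2]{3} (the Hardy--Littlewood coefficient inequality) and used as a black box, so your closing recommendation to ``simply invoke \cite{3} and move on'' is exactly what the authors do. Your two endpoint identifications are also correct: $p=2$ is Parseval and $p=1$ is Hardy's inequality, and the Stein--Weiss/complex-interpolation bookkeeping you describe does produce the weight $(n+1)^{p-2}$ for $1\le p\le 2$.

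There is, however, a genuine gap if your sketch is read as a proof of the lemma as stated: the statement covers $0<p\le 2$, and interpolation between the endpoints $H^1$ and $H^2$ can only reach exponents $1\le p\le 2$. The range $0<p<1$ is not an interpolation space between $H^1$ and $H^2$ (indeed $H^p$ is not even normable there), and your proposal never addresses it; note that this is not a vacuous omission for this paper, whose main theorems live on $H^p$ with $0<p\le 1$. The classical proof in Duren handles the full range $0<p\le 2$ without interpolation: one groups the coefficients into dyadic blocks $2^k\le n<2^{k+1}$, applies H\"older's inequality on each block to pass from the $\ell^p$ sum to $\bigl(\sum_{n}|b_n|^2\bigr)^{p/2}$, bounds the latter by $r^{-2^{k+2}}M_2^2(r,g)$ with $r=1-2^{-k}$, and then uses the growth estimate $M_2(r,g)\lesssim (1-r)^{\frac12-\frac1p}\|g\|_{H^p}$, which follows from $M_2^2\le M_\infty^{2-p}M_p^p$ and $M_\infty(r,g)\lesssim (1-r)^{-1/p}\|g\|_{H^p}$. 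If you want a self-contained argument compatible with the lemma's stated hypotheses, that is the route to take; otherwise the citation alone suffices, as in the paper.
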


\section{Boundedness of $\mathcal{H}_{\mu,\alpha }$ acting on $H^p$}

In this section, we qualify those measures $\mu$ for which $\mathcal{H}_{\mu,\alpha}$ is a bounded operator from $H^p$ to $H^q$.

\begin{theorem}\label{Th1.2}  
Suppose that $0<p\le1$ and $\alpha>0$. Let $\mu$ be a positive Borel measure on $[0,1)$ which satisfies the condition in Lemma \ref{Th1.1}.

(i) If $\alpha>1$, then $\mathcal{H}_{\mu,\alpha}:H^p\rightarrow H^1$ is bounded if and only if $\mu$ is a $\left(\frac{1}{p}+\alpha-1\right)$-Carleson measure;

(ii) If $0<\alpha<1$, then $\mathcal{H}_{\mu,\alpha}:H^p\rightarrow H^1$ is bounded if and only if $\mu$ is a $\frac{1}{p}$-Carleson measure.
\end{theorem}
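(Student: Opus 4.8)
The plan is to pass to the integral operator. By Lemma~\ref{Th1.1}, the standing hypothesis on $\mu$ (which for $0<p\le1$ means precisely that $\mu$ is a $\frac1p$-Carleson measure, since this is exactly what is needed to make the double series define a holomorphic function) gives the representation $\mathcal{H}_{\mu,\alpha}(f)(z)=\mathcal{I}_{\mu,\alpha}(f)(z)=\int_{[0,1)}\frac{f(t)}{(1-tz)^{\alpha}}\,d\mu(t)$ for all $f\in H^p$, so it suffices to characterize boundedness of $\mathcal{I}_{\mu,\alpha}:H^p\to H^1$. Two tools are used repeatedly: Minkowski's integral inequality together with Lemma~\ref{Lm1.3} (applied with $1+\tau=\alpha$, i.e.\ $\tau=\alpha-1$) to control the integral means of the kernel $(1-t\,\cdot\,)^{-\alpha}$, and the classical Carleson-measure embedding for Hardy spaces, by which $\int_{\mathbb{D}}|f|\,d\nu\lesssim\|f\|_{H^p}$ for $f\in H^p$ whenever $\nu$ is a $\frac1p$-Carleson measure and $0<p\le1$.

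For the sufficiency in (i), since $\alpha>1$ we have $\tau=\alpha-1>0$, so Lemma~\ref{Lm1.3}(i) gives $\frac1{2\pi}\int_0^{2\pi}|1-tre^{i\theta}|^{-\alpha}\,d\theta\asymp(1-t^2r^2)^{1-\alpha}\le(1-t)^{1-\alpha}$, uniformly in $0<r<1$; Minkowski's inequality then yields $\|\mathcal{H}_{\mu,\alpha}(f)\|_{H^1}\lesssim\int_{[0,1)}|f(t)|(1-t)^{1-\alpha}\,d\mu(t)$. Now set $d\nu(t)=(1-t)^{1-\alpha}\,d\mu(t)$. A short integration by parts, using $\mu([s,1))\lesssim(1-s)^{\frac1p+\alpha-1}$ and $\frac1p>0$, shows $\nu([t,1))\lesssim(1-t)^{1/p}$, i.e.\ $\nu$ is a $\frac1p$-Carleson measure, and the Carleson embedding gives $\|\mathcal{H}_{\mu,\alpha}(f)\|_{H^1}\lesssim\int_{\mathbb{D}}|f|\,d\nu\lesssim\|f\|_{H^p}$. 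The sufficiency in (ii) is softer: for $0<\alpha<1$ one has $\tau=\alpha-1<0$, so Lemma~\ref{Lm1.3}(iii) gives $\frac1{2\pi}\int_0^{2\pi}|1-tre^{i\theta}|^{-\alpha}\,d\theta\asymp1$ uniformly in $r$, whence $\|\mathcal{H}_{\mu,\alpha}(f)\|_{H^1}\lesssim\int_{[0,1)}|f(t)|\,d\mu(t)\lesssim\|f\|_{H^p}$ directly, since $\mu$ is already a $\frac1p$-Carleson measure. In (ii) the necessity of the $\frac1p$-Carleson condition requires nothing new: it is exactly the standing hypothesis (equivalently, it is forced merely by $\mathcal{H}_{\mu,\alpha}$ being well defined on $H^p$).

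To prove the necessity in (i), assume $\mathcal{H}_{\mu,\alpha}:H^p\to H^1$ is bounded and test on the functions $f_a(z)=\left(\frac{1-a^2}{(1-az)^2}\right)^{1/p}$, $a\in[0,1)$, which satisfy $\|f_a\|_{H^p}\asymp1$. Combining the elementary growth estimate $\|g\|_{H^1}\ge(1-|z|)\,|g(z)|$ (immediate from the Cauchy integral formula; see \cite{3}), evaluated at $z=a$, with the identity $\mathcal{H}_{\mu,\alpha}(f_a)(a)=(1-a^2)^{1/p}\int_{[0,1)}(1-at)^{-2/p-\alpha}\,d\mu(t)$ and the bound $1-at\le 2(1-a)$ valid for $t\in[a,1)$, we obtain $\|\mathcal{H}_{\mu,\alpha}\|_{H^p\to H^1}\gtrsim(1-a)\,\mathcal{H}_{\mu,\alpha}(f_a)(a)\gtrsim(1-a)^{1-\frac1p-\alpha}\,\mu([a,1))$, that is, $\mu([a,1))\lesssim(1-a)^{\frac1p+\alpha-1}$; thus $\mu$ is a $(\frac1p+\alpha-1)$-Carleson measure, which completes the proof.

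The step I expect to be the main obstacle is the sufficiency in (i): the naive pointwise estimate $|f(t)|\lesssim\|f\|_{H^p}(1-t)^{-1/p}$ is too wasteful, as it would require $\int_{[0,1)}(1-t)^{1-\frac1p-\alpha}\,d\mu(t)<\infty$, strictly stronger than the $(\frac1p+\alpha-1)$-Carleson condition; one must instead keep the factor $(1-t)^{1-\alpha}$ attached to $d\mu$ and recognize the resulting quantity as a genuine Carleson-embedding integral. Verifying that $\nu=(1-t)^{1-\alpha}\,d\mu$ is $\frac1p$-Carleson (the integration-by-parts computation) and making the integral-mean bounds uniform in $r$ before taking the supremum defining $\|\cdot\|_{H^1}$ are the technical points needing care; the remaining steps are routine.
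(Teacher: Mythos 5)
Your proof is correct, but it follows a genuinely different route from the paper's on both halves of part (i). For the necessity the paper invokes Fefferman duality $(H^1)^\ast\cong BMOA$, reduces boundedness to the bilinear estimate $\bigl|\int_{[0,1)}R^{-1,\alpha-1}g(r^2t)\overline{f(t)}\,d\mu(t)\bigr|\lesssim\|f\|_{H^p}\|g\|_{BMOA}$, and tests it against the pair $f_a$, $g_a(z)=\log\frac{e}{1-az}$, which requires computing $R^{-1,\alpha-1}g_a(t)\asymp(1-at)^{1-\alpha}$; you instead evaluate $\mathcal{H}_{\mu,\alpha}(f_a)$ at the single point $z=a$ and use the elementary growth bound $(1-|z|)|g(z)|\le\|g\|_{H^1}$, which is shorter, avoids duality and the fractional-derivative computation entirely, and is legitimate because Lemma \ref{Th1.1} supplies the integral representation with a positive integrand. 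For the sufficiency the paper again goes through duality, bounding $|R^{-1,\alpha-1}g(r^2t)|\lesssim\|g\|_{\mathcal{B}}(1-r^2t)^{1-\alpha}$ via Lemma \ref{Lm1.2} and $BMOA\subset\mathcal{B}$; you bound $\|\mathcal{H}_{\mu,\alpha}(f)\|_{H^1}$ directly by Fubini and Lemma \ref{Lm1.3}(i) and then apply the Carleson embedding to $d\nu=(1-t)^{1-\alpha}d\mu$ --- this is exactly the strategy the paper itself uses for the $H^q$ ($q>1$) target in Theorem \ref{th3.3}(i), and your integration-by-parts verification that $\nu$ is a $\frac1p$-Carleson measure reproves the cited \cite[Lemma 3.2]{12}. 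Part (ii) sufficiency coincides with the paper's argument; for the (ii) necessity your observation that the conclusion is already the standing hypothesis from Lemma \ref{Th1.1} is valid (the paper's test-function derivation is logically redundant there), though your parenthetical claim that mere well-definedness of $\mathcal{H}_{\mu,\alpha}$ on $H^p$ forces the $\frac1p$-Carleson condition is not established anywhere and should be dropped; it is also not needed. The net effect of your approach is a more elementary and self-contained proof at the cost of not exhibiting the $BMOA$ pairing that the paper reuses in its compactness and essential-norm sections.
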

\begin{remark}
 From  \cite[Theorem 1.2]{9}, we know that $\mathcal{H}_{\mu,1}:H^1\rightarrow H^1$ is bounded if and only if $\mu$ is a 1-logarithmic 1-Carleson measure.
\end{remark}

\begin{proof}
Since $\mu$ satisfies the condition in Lemma \ref{Th1.1}, it is easy to see that
\begin{align}\label{eqn3.1}
\int_{[0,1)}^{}|f(t)|d\mu(t)\lesssim\|f\|_{H^p},\quad for \ all \ f\in H^p.
\end{align}
For $0\le r<1$, $f\in H^p$ and $g\in H^1$, we have that
$$
\begin{aligned}
&\int_{0}^{2\pi}\int_{[0,1)}^{}\left|\frac{f(t)g(re^{i\theta})}{(1-rte^{-i\theta})^\alpha}\right|d\mu(t)d\theta\\
\le&\frac{1}{(1-r)^\alpha}\int_{[0,1)}^{}|f(t)|d\mu(t)\int_{0}^{2\pi}|g(re^{i\theta})|d\theta\\
\lesssim&\frac{\|f\|_{H^p}\|g_r\|_{H^1}}{(1-r)^\alpha}\lesssim\frac{\|f\|_{H^p}\|g\|_{H^1}}{(1-r)^\alpha}<\infty.
\end{aligned}
$$
where, $g_r$ is a function defined by $g_r(z)=g(rz),z\in\mathbb{D}$. Let $g(z)=\sum_{n=0}^{\infty}b_nz^n$. Using Fubini's theorem and a simple calculation, we deduce that
\begin{align}\label{eqn3.2}
&\frac{1}{2\pi}\int_{0}^{2\pi}\overline{\mathcal{H}_{\mu,\alpha}(f)(re^{i\theta})}g(re^{i\theta})d\theta\notag\\
=&\int_{[0,1)}^{}\sum_{n=0}^{\infty}\frac{\Gamma(n+\alpha)}{\Gamma(n+1)\Gamma(\alpha)}b_n(r^2t)^n\overline{f(t)}d\mu(t)\notag\\
=&\int_{[0,1)}^{}R^{-1,\alpha-1}g(r^2t)\overline{f(t)}d\mu(t).
\end{align}
(i) Recall that the Fefferman's duality theorem (see \cite{4}) shows that $(H^1)^\ast\cong BMOA$ and $(VMOA)^\ast\cong H^1$ under the pairing
$$
<F,G>=\lim_{r\to1}\frac{1}{2\pi}\int_{0}^{2\pi}\overline{F\left(re^{i\theta}\right)}G\left(e^{i\theta}\right)d\theta, \quad F\in H^1,\ G\in BMOA (resp.,VMOA).
$$
Therefore, it follows from (\ref{eqn3.2}) that $\mathcal{H}_{\mu,\alpha}:H^p\rightarrow H^1$ is bounded if and only if
\begin{align}\label{eqn-3.5}
\left|\int_{[0,1)}^{}R^{-1,\alpha-1}g(r^2t)\overline{f(t)}d\mu(t)\right|\lesssim\|f\|_{H^p}\|\|g\|_{BMOA},\quad f\in H^p,g\in VMOA.
\end{align}

Assume that $\mathcal{H}_{\mu,\alpha}:H^p\rightarrow H^1$ is bounded, take the families of text functions
\begin{align}\label{eqn-abc}
f_a(z)=\frac{(1-a^2)^{\frac{1}{p}}}{(1-az)^{\frac{2}{p}}}\quad and \quad g_a(z)=\log\frac{e}{1-az},\quad \frac{1}{2}<a<1.
\end{align}
Then $f_a\in H^p$, $g_a\in VMOA$, and
$$
\sup\limits_{\frac{1}{2}<a<1}\|f_a\|_{H^p}\lesssim1\quad  and \quad \sup\limits_{\frac{1}{2}<a<1}\|g_a\|_{BMOA}\lesssim1.
$$
It is a remarkable fact that
$$
\frac{1}{(1-z)^\alpha}=\sum_{n=0}^{\infty}\frac{\Gamma(n+\alpha)}{\Gamma(n+1)\Gamma(\alpha)}z^n,
$$
and
$$
\frac{\Gamma(n+\alpha)}{\Gamma(n+1)\Gamma(\alpha)}\asymp n^{\alpha-1}
$$
by Stirling's formula. Then,
$$
R^{-1,\alpha-1}g_a(t)=1+\sum_{n=1}^{\infty}\frac{\Gamma(n+\alpha)}{\Gamma(n+1)\Gamma(\alpha)}\frac{(at)^n}{n}\asymp\frac{1}{(1-at)^{\alpha-1}}.
$$
Taking $r\in[a,1)$ and using (\ref{eqn-3.5}), we obtain that
$$
\begin{aligned}
1&\gtrsim\sup\limits_{\frac{1}{2}<a<1}\|f_a\|_{H^p}\sup\limits_{\frac{1}{2}<a<1}\|g_a\|_{BMOA}\\
&\gtrsim\left|\int_{[0,1)}^{}R^{-1,\alpha-1}g_a\left(r^2t\right)\overline{f_a(t)}d\mu(t)\right|\\
&\gtrsim\int_{[a,1)}^{}\frac{1}{(1-ar^2t)^{\alpha-1}}\frac{(1-a^2)^{\frac{1}{p}}}{(1-at)^{\frac{2}{p}}}d\mu(t)\\
&\gtrsim\frac{1}{(1-a^2)^{\frac{1}{p}+\alpha-1}}\mu([a,1)).
\end{aligned}
$$
This implies that $\mu$ is a $\left(\frac{1}{p}+\alpha-1\right)$-Carleson measure.

Conversely, if $\mu$ is a $\left(\frac{1}{p}+\alpha-1\right)$-Carleson measure. Using \cite[Lemma 3.2]{12}, we have that $\frac{d\mu(t)}{(1-t)^{\alpha-1}}$ is a $\frac{1}{p}$-Carleson measure. It is easy see that
$$
\int_{[0,1)}^{}\frac{|f(t)|}{(1-t)^{\alpha-1}}d\mu(t)\lesssim\|f\|_{H^p},\quad for\ all\ f\in H^p,0<p\le1.
$$
 Using this, Lemma \ref{Lm1.2} and $BMOA\subset\mathcal{B}$ (see \cite{4}), we obtain that
$$
\begin{aligned}
\left|\int_{[0,1)}^{}R^{-1,\alpha-1}g(r^2t)\overline{f(t)}d\mu(t)\right|&\lesssim\|g\|_\mathcal{B}\int_{[0,1)}^{}\frac{|f(t)|}{(1-r^2t)^{\alpha-1}}d\mu(t)\\
&\lesssim\|g\|_{BMOA}\int_{[0,1)}^{}\frac{|f(t)|}{(1-t)^{\alpha-1}}d\mu(t)\\
&\lesssim\|f\|_{H^p}\|g\|_{BMOA},\quad f\in H^p,g\in VMOA.
\end{aligned}
$$
Therefore, (\ref{eqn-3.5}) holds, and hence $\mathcal{H}_{\mu,\alpha}:H^p\rightarrow H^1$ is bounded.

(ii) If $\frac{1}{2}<\alpha<1$ and let $f_a(z)$ and $g_a(z)$ be the functions defined in (\ref{eqn-abc}), then
$$
R^{-1,\alpha-1}g_a(t)=1+\sum_{n=1}^{\infty}\frac{\Gamma(n+\alpha)}{\Gamma(n+1)\Gamma(\alpha)}\frac{(at)^n}{n}\asymp1.
$$
Taking $r\in[a,1)$ and using (\ref{eqn-3.5}), we obtain that
$$
\begin{aligned}
1&\gtrsim\sup\limits_{\frac{1}{2}<a<1}\|f_a\|_{H^p}\sup\limits_{\frac{1}{2}<a<1}\|g_a\|_{BMOA}\\
&\gtrsim\left|\int_{[0,1)}^{}R^{-1,\alpha-1}g_a(r^2t)\overline{f_a(t)}d\mu(t)\right|\\
&\gtrsim\int_{[a,1)}^{}\frac{(1-a^2)^{\frac{1}{p}}}{(1-at)^{\frac{2}{p}}}d\mu(t)\\
&\gtrsim\frac{1}{(1-a^2)^{\frac{1}{p}}}\mu([a,1)).
\end{aligned}
$$
This implies that $\mu$ is a $\frac{1}{p}$-Carleson measure.

If $\mu$ is a $\frac{1}{p}$-Carleson measure, then
$$
\int_{[0,1)}^{}|f(t)|d\mu(t)\lesssim\|f\|_{H^p},\quad for\ all\ f\in H^p,0<p\le1.
$$
Using Fubini's theorem and Lemma \ref{Lm1.3}, we obtain that
$$
\begin{aligned}
\|\mathcal{H}_{\mu,\alpha}(f)\|_{H^1}&=\sup\limits_{0<r<1}\frac{1}{2\pi}\int_{0}^{2\pi}\int_{[0,1)}^{}\frac{|f(t)|}{\left|1-tre^{i\theta}\right|^\alpha}d\mu(t)d\theta\\
&=\sup\limits_{0<r<1}\int_{[0,1)}^{}|f(t)|\left(\frac{1}{2\pi}\int_{0}^{2\pi}\frac{d\theta}{\left|1-tre^{i\theta}\right|^{\alpha}}\right)d\mu(t)\\
&\lesssim\int_{[0,1)}^{}|f(t)|d\mu(t)\lesssim\|f\|_{H^p}.
\end{aligned}
$$
Therefore, $\mathcal{H}_{\mu,\alpha}(H^p)\subset H^1$. The closed graph theorem implies that $\mathcal{H}_{\mu,\alpha}:H^p\rightarrow H^1$ is bounded.
\end{proof}


Theorem \ref{Th1.2} and \cite[Theorem 2]{7} together yield the following corollary.

\begin{corollary}
Let $\mu$ be a positive Borel measure on $[0,1),0<p\le1$. If $\mathcal{H}_{\mu,\alpha}:H^p\rightarrow H^1$ is bounded for some $\alpha>0$, then for any $0<{\alpha}'<\alpha, \mathcal{H}_{\mu,{\alpha}'}:H^p\rightarrow H^1$ is bounded.
\end{corollary}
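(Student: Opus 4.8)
The plan is to combine the characterization just obtained in Theorem \ref{Th1.2} with the monotonicity result \cite[Theorem 2]{7}, which (in the relevant formulation) asserts that whenever $0<s'<s$, every $s$-Carleson measure on $[0,1)$ is automatically an $s'$-Carleson measure; equivalently, the Carleson exponent can always be lowered. So the whole argument is a short chain of implications built on the dichotomy $\alpha>1$ versus $0<\alpha\le 1$ that already appears in Theorem \ref{Th1.2}. First I would fix $\mu$, $p\in(0,1]$ and $\alpha>0$ with $\mathcal{H}_{\mu,\alpha}:H^p\to H^1$ bounded, and record what Theorem \ref{Th1.2} gives: if $\alpha>1$ then $\mu$ is a $\left(\tfrac1p+\alpha-1\right)$-Carleson measure, while if $0<\alpha\le 1$ then $\mu$ is a $\tfrac1p$-Carleson measure. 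In either case $\mu$ is at least a $\tfrac1p$-Carleson measure, since when $\alpha>1$ we have $\tfrac1p+\alpha-1>\tfrac1p$ and we may lower the exponent via \cite[Theorem 2]{7}.

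Next I would split according to the size of ${\alpha}'$. Case 1: $0<{\alpha}'\le 1$. By Theorem \ref{Th1.2}(ii), boundedness of $\mathcal{H}_{\mu,{\alpha}'}:H^p\to H^1$ is equivalent to $\mu$ being a $\tfrac1p$-Carleson measure, which we have just verified holds in all cases; hence $\mathcal{H}_{\mu,{\alpha}'}:H^p\to H^1$ is bounded. Case 2: $1<{\alpha}'<\alpha$ (which forces $\alpha>1$). Here Theorem \ref{Th1.2}(i) applied with exponent ${\alpha}'$ says $\mathcal{H}_{\mu,{\alpha}'}:H^p\to H^1$ is bounded iff $\mu$ is a $\left(\tfrac1p+{\alpha}'-1\right)$-Carleson measure. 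But we know $\mu$ is a $\left(\tfrac1p+\alpha-1\right)$-Carleson measure, and since ${\alpha}'<\alpha$ we have $\tfrac1p+{\alpha}'-1<\tfrac1p+\alpha-1$, so \cite[Theorem 2]{7} again lets us pass to the smaller exponent. Thus $\mathcal{H}_{\mu,{\alpha}'}:H^p\to H^1$ is bounded in this case as well, and the two cases exhaust all ${\alpha}'\in(0,\alpha)$.

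I do not anticipate a genuine obstacle; the only point requiring a little care is making sure the boundary case ${\alpha}'=1$ is covered (it falls under Case 1 once we read Theorem \ref{Th1.2}(ii) as valid for $0<{\alpha}'\le 1$, consistently with how part (i) is stated for $\alpha>1$) and that we invoke \cite[Theorem 2]{7} in the direction "higher exponent $\Rightarrow$ lower exponent" for measures supported on $[0,1)$ — this is exactly the elementary implication $\mu([t,1))\le C(1-t)^s\le C(1-t)^{s'}$ for $t$ near $1$, together with the trivial bound away from $1$. With that in hand the corollary follows immediately by quoting Theorem \ref{Th1.2} in both directions. $\hfill\qed$
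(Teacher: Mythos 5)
Your overall strategy is the one the paper intends (the paper offers no written proof, only the remark that Theorem \ref{Th1.2} and \cite[Theorem 2]{7} together yield the corollary), but there is a genuine gap at the borderline value ${\alpha}'=1$. You dispose of that case by declaring that Theorem \ref{Th1.2}(ii) ``reads as valid for $0<{\alpha}'\le 1$''. It does not: part (ii) is stated only for $0<\alpha<1$, and the Remark immediately following Theorem \ref{Th1.2} records that for $\alpha=1$, $p=1$, boundedness of $\mathcal{H}_{\mu,1}:H^1\to H^1$ is equivalent to $\mu$ being a $1$-logarithmic $1$-Carleson measure --- a condition strictly stronger than being a $1$-Carleson measure. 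So ``$\mu$ is a $\tfrac1p$-Carleson measure'' is not a sufficient condition at ${\alpha}'=1$, and your Case 1 breaks down there. Relatedly, you have misread the citation: \cite[Theorem 2]{7} is not the elementary fact that an $s$-Carleson measure is an $s'$-Carleson measure for $s'<s$ (you prove that yourself in one line and need no reference for it); it is the characterization of the measures for which $\mathcal{H}_{\mu,1}$ is bounded between Hardy spaces, and its entire role in the corollary is to cover the borderline case ${\alpha}'=1$ that your argument skips. The same borderline issue also appears on the hypothesis side if the given $\alpha$ happens to equal $1$, where Theorem \ref{Th1.2} does not apply and the necessary $\tfrac1p$-Carleson condition must instead be extracted from \cite[Theorem 2]{7}.

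The gap is repairable. If ${\alpha}'=1$ then necessarily $\alpha>1$, so by Theorem \ref{Th1.2}(i) the measure $\mu$ is a $\left(\tfrac1p+\alpha-1\right)$-Carleson measure with surplus exponent $\alpha-1>0$. This surplus absorbs any logarithmic factor: for every $\beta>0$,
$$
\mu([t,1))\log^\beta\frac{e}{1-t}\lesssim(1-t)^{\frac1p+\alpha-1}\log^\beta\frac{e}{1-t}\lesssim(1-t)^{\frac1p},
$$
so $\mu$ is a $\beta$-logarithmic $\tfrac1p$-Carleson measure for every $\beta>0$, which is more than enough to invoke \cite[Theorem 2]{7} and conclude that $\mathcal{H}_{\mu,1}:H^p\to H^1$ is bounded. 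With that case inserted, your two remaining cases (${\alpha}'>1$ via Theorem \ref{Th1.2}(i) together with lowering the Carleson exponent, and $0<{\alpha}'<1$ via Theorem \ref{Th1.2}(ii)) are correct and complete the proof.
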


\begin{corollary}
Suppose that $0<p\le1$ and $\alpha>0$. Let $\mu$ be a positive Borel measure on $[0,1)$.

(i) If $\alpha>1$ and $\int_{[0,1)}^{}\frac{d\mu(t)}{(1-t)^{\frac{1}{p}+\alpha-1}}<\infty$, then $\mathcal{H}_{\mu,\alpha}:H^p\rightarrow H^1$ is bounded.

(ii) If $0<\alpha<1$ and $\int_{[0,1)}^{}\frac{d\mu(t)}{(1-t)^\frac{1}{p}}<\infty$, then $\mathcal{H}_{\mu,\alpha}:H^p\rightarrow H^1$ is bounded.
\end{corollary}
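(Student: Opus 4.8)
The plan is to reduce both statements to Theorem~\ref{Th1.2} by observing that the stated integrability hypotheses are just (slightly stronger) reformulations of the relevant Carleson-measure conditions. The one elementary fact I would record first is: if $s>0$ and $\int_{[0,1)}\frac{d\mu(t)}{(1-t)^s}<\infty$, then $\mu$ is an $s$-Carleson measure. To see this, fix $0\le x<1$; since $(1-t)^{-s}\ge(1-x)^{-s}$ for every $t\in[x,1)$, one has
$$
\mu([x,1))=\int_{[x,1)}d\mu(t)\le(1-x)^s\int_{[x,1)}\frac{d\mu(t)}{(1-t)^s}\le(1-x)^s\int_{[0,1)}\frac{d\mu(t)}{(1-t)^s},
$$
which is exactly the $s$-Carleson condition $\mu([x,1))\le C(1-x)^s$ with $C=\int_{[0,1)}(1-t)^{-s}\,d\mu(t)$. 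In fact, since the tail of a convergent integral tends to $0$, $\mu$ is even a vanishing $s$-Carleson measure, but only the non-vanishing version is needed here.

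For part (i), apply the above with $s=\frac1p+\alpha-1$, which is positive because $p\le1$ and $\alpha>1$: the hypothesis $\int_{[0,1)}(1-t)^{-(1/p+\alpha-1)}\,d\mu(t)<\infty$ gives that $\mu$ is a $\left(\frac1p+\alpha-1\right)$-Carleson measure. Since $\frac1p+\alpha-1>\frac1p$ and $1-t\in(0,1]$, monotonicity of $(1-t)^{s}$ in $s$ shows such a measure is a fortiori a $\frac1p$-Carleson measure, so it satisfies the standing hypothesis of Lemma~\ref{Th1.1} and $\mathcal{H}_{\mu,\alpha}(f)$ is well defined for $f\in H^p$. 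Theorem~\ref{Th1.2}(i) then yields that $\mathcal{H}_{\mu,\alpha}:H^p\to H^1$ is bounded. For part (ii), apply the elementary fact with $s=\frac1p$: the hypothesis $\int_{[0,1)}(1-t)^{-1/p}\,d\mu(t)<\infty$ shows $\mu$ is a $\frac1p$-Carleson measure, which is precisely the hypothesis of Lemma~\ref{Th1.1}, and Theorem~\ref{Th1.2}(ii) applies directly to conclude boundedness of $\mathcal{H}_{\mu,\alpha}:H^p\to H^1$.

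There is no substantive obstacle: the corollary is an immediate consequence of Theorem~\ref{Th1.2} once the integrability hypothesis is recognized as implying the corresponding Carleson condition via the displayed one-line estimate. The only point warranting an explicit (but trivial) remark is that the measures in question automatically satisfy the condition of Lemma~\ref{Th1.1}, which follows from the same monotonicity $(1-t)^{s}\le(1-t)^{s'}$ for $s\ge s'$ and $t\in[0,1)$.
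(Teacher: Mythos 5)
Your proposal is correct, and it is a genuinely different (and more economical) route than the one in the paper. You reduce the corollary to Theorem~3.1 by the one-line observation that $\int_{[0,1)}(1-t)^{-s}\,d\mu(t)<\infty$ forces $\mu([x,1))\le(1-x)^{s}\int_{[x,1)}(1-t)^{-s}\,d\mu(t)\le C(1-x)^{s}$, i.e.\ $\mu$ is an $s$-Carleson measure; together with the monotonicity remark guaranteeing the standing hypothesis of Lemma~2.1, this makes the statement a literal corollary of Theorem~3.1(i) and (ii). The paper instead re-runs the estimates from scratch: for (i) it verifies the duality condition (3.5) directly, combining Lemma~2.2 with the pointwise growth bound $|f(z)|\lesssim\|f\|_{H^p}(1-|z|)^{-1/p}$ and the finiteness of $\int(1-t)^{-(1/p+\alpha-1)}\,d\mu(t)$; for (ii) it bounds $\|\mathcal{H}_{\mu,\alpha}(f)\|_{H^1}$ directly via Fubini and Lemma~2.3 and then uses the same pointwise bound against $\int(1-t)^{-1/p}\,d\mu(t)$. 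Your reduction is shorter, makes the logical dependence on Theorem~3.1 explicit, and (via your parenthetical remark that the tail of a convergent integral vanishes) even shows the hypotheses yield \emph{vanishing} Carleson measures, hence compactness by Theorem~4.1 — strictly more than the corollary claims; the paper's version has the merit of being self-contained at the level of the estimates. Both arguments are sound.
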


\begin{proof}
(i) If $\alpha>1$ and $\int_{[0,1)}^{}\frac{d\mu(t)}{(1-t)^{\frac{1}{p}+\alpha-1}}<\infty$, using Lemma \ref{Lm1.2} and the fact that
\begin{align}\label{eqn-3.4}
|f(z)|\lesssim\frac{\|f\|_{H^p}}{(1-|z|)^\frac{1}{p}},\quad for \ all\ f\in H^p,z\in\mathbb{D}.
\end{align}
We have that
$$
\begin{aligned}
\left|\int_{[0,1)}^{}R^{-1,\alpha-1}g(r^2t)\overline{f(t)}d\mu(t)\right|&\lesssim\|f\|_{H^p}\|g\|_\mathcal{B}\int_{[0,1)}^{}\frac{1}{(1-r^2t)^{\alpha-1}(1-t)^\frac{1}{p}}d\mu(t)\\
&\lesssim\|f\|_{H^p}\|g\|_{BMOA}\int_{[0,1)}^{}\frac{1}{(1-t)^{\frac{1}{p}+\alpha-1}}d\mu(t)\\
&\lesssim\|f\|_{H^p}\|g\|_{BMOA},\quad f\in H^p,g\in VMOA.
\end{aligned}
$$
Therefore, (\ref{eqn-3.5}) holds, and hence $\mathcal{H}_{\mu,\alpha}:H^p\rightarrow H^1$ is bounded.

(ii) If $0<\alpha<1$ and $\int_{[0,1)}^{}\frac{d\mu(t)}{(1-t)^\frac{1}{p}}<\infty$, then (\ref{eqn-3.4}) shows that
$$
\begin{aligned}
\|\mathcal{H}_{\mu,\alpha}(f)\|_{H^1}&\le\sup\limits_{0<r<1}\frac{1}{2\pi}\int_{0}^{2\pi}\int_{[0,1)}^{}\frac{|f(t)|}{\left|1-tre^{i\theta}\right|^\alpha}d\mu(t)d\theta\\
&=\sup\limits_{0<r<1}\int_{[0,1)}^{}|f(t)|\frac{1}{2\pi}\int_{0}^{2\pi}\frac{d\theta}{\left|1-tre^{i\theta}\right|^{\alpha}}d\mu(t)\\
&\lesssim\int_{[0,1)}^{}|f(t)|d\mu(t)\\
&\lesssim\|f\|_{H^p}\int_{[0,1)}^{}\frac{d\mu(t)}{(1-t)^\frac{1}{p}}\lesssim\|f\|_{H^p}.
\end{aligned}
$$
This proof is finished.
\end{proof}

\begin{theorem}\label{th3.3}
Suppose that $0<p\le1, 1<q<\infty$ and $\alpha>0$. Let $\mu$ be a positive Borel measure on $[0,1)$ which satisfies the condition in Lemma \ref{Th1.1}.

(i) If $\alpha>\frac{1}{q}$, then $\mathcal{H}_{\mu,\alpha}:H^p\rightarrow H^q$ is bounded if and only if $\mu$ is a $\left(\frac{1}{p}+\frac{1}{{q}'}+\alpha-1\right)$-Carleson measure;

(ii) If $\alpha<\frac{1}{q}$, then $\mathcal{H}_{\mu,\alpha}:H^p\rightarrow H^q$ is bounded;

(iii) If $\alpha=\frac{1}{q}$ and $\mu$ is a $\frac{1}{q}$-logarithmic $\frac{1}{p}$-Carleson measure, then $\mathcal{H}_{\mu,\alpha}:H^p\rightarrow H^q$ is bounded.
\end{theorem}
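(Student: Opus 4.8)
The plan is to follow the duality/test-function strategy of the proof of Theorem~\ref{Th1.2}, but with the Fefferman pairing replaced by the elementary fact that on each circle $M_q(r,F)M_{q'}(r,G)$ dominates $\tfrac{1}{2\pi}\int_0^{2\pi}|\overline{F}G|\,d\theta$ (H\"older's inequality), $1<q<\infty$. Since $\mu$ satisfies the hypothesis of Lemma~\ref{Th1.1}, we have $\mathcal{H}_{\mu,\alpha}(f)=\mathcal{I}_{\mu,\alpha}(f)(z)=\int_{[0,1)}f(t)(1-tz)^{-\alpha}\,d\mu(t)$ for $f\in H^p$, and, exactly as in \eqref{eqn3.2},
$$
\frac{1}{2\pi}\int_0^{2\pi}\overline{\mathcal{H}_{\mu,\alpha}(f)(re^{i\theta})}\,g(re^{i\theta})\,d\theta=\int_{[0,1)}R^{-1,\alpha-1}g(r^2t)\,\overline{f(t)}\,d\mu(t),\qquad f\in H^p,\ g\in H^{q'}.
$$
Hence, once $\mathcal{H}_{\mu,\alpha}:H^p\to H^q$ is bounded, $\bigl|\int_{[0,1)}R^{-1,\alpha-1}g(r^2t)\overline{f(t)}\,d\mu(t)\bigr|\lesssim\|f\|_{H^p}\|g\|_{H^{q'}}$ uniformly in $0<r<1$; this is what I would use to extract the necessity conditions.

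For part~(i) I would prove necessity with the test pair $f_a(z)=(1-a^2)^{1/p}(1-az)^{-2/p}$ and $g_a(z)=(1-a^2)^{1/q'}(1-az)^{-2/q'}$, $\tfrac12<a<1$, for which $\|f_a\|_{H^p}\asymp\|g_a\|_{H^{q'}}\asymp1$ by Lemma~\ref{Lm1.3}(i). Expanding $(1-z)^{-\beta}$ and using Stirling's formula gives $\frac{\Gamma(n+\alpha)}{\Gamma(n+1)\Gamma(\alpha)}\cdot\frac{\Gamma(n+2/q')}{\Gamma(n+1)\Gamma(2/q')}\asymp n^{\alpha+2/q'-2}$, hence $R^{-1,\alpha-1}g_a(z)\asymp(1-a^2)^{1/q'}(1-az)^{-(\alpha+2/q'-1)}$, the exponent being positive precisely because $\alpha>1/q$. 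Letting $r\uparrow1$, dropping the part of the (positive) integrand over $[0,a)$, and using $1-ar^2t\asymp1-at\asymp1-a^2$ for $t\in[a,1)$, one gets $\mu([a,1))\lesssim(1-a^2)^{1/p+1/q'+\alpha-1}$. For the converse in~(i) I would bound $\mathcal{I}_{\mu,\alpha}(f)$ by Minkowski's integral inequality and Lemma~\ref{Lm1.3}(i) (with $\tau=\alpha q-1>0$):
$$
\|\mathcal{H}_{\mu,\alpha}(f)\|_{H^q}\le\int_{[0,1)}|f(t)|\Bigl(\sup_{0<\rho<1}\frac{1}{2\pi}\int_0^{2\pi}\frac{d\theta}{|1-t\rho e^{i\theta}|^{\alpha q}}\Bigr)^{1/q}d\mu(t)\lesssim\int_{[0,1)}\frac{|f(t)|}{(1-t)^{\alpha-1/q}}\,d\mu(t).
$$
Since $\tfrac1p+\tfrac1{q'}+\alpha-1=\tfrac1p+(\alpha-\tfrac1q)$, \cite[Lemma~3.2]{12} converts the hypothesis on $\mu$ into the statement that $d\mu(t)/(1-t)^{\alpha-1/q}$ is a $\tfrac1p$-Carleson measure, which for $0<p\le1$ yields $\int_{[0,1)}|f(t)|(1-t)^{-(\alpha-1/q)}\,d\mu(t)\lesssim\|f\|_{H^p}$ and hence boundedness.

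For parts~(ii) and~(iii) only the boundedness (sufficiency) is at issue, and the same Minkowski estimate applies with a different case of Lemma~\ref{Lm1.3}. If $\alpha<1/q$, then $\alpha q<1$ and Lemma~\ref{Lm1.3}(iii) gives $\bigl\|(1-tz)^{-\alpha}\bigr\|_{H^q}\lesssim1$, so $\|\mathcal{H}_{\mu,\alpha}(f)\|_{H^q}\lesssim\int_{[0,1)}|f(t)|\,d\mu(t)\lesssim\|f\|_{H^p}$, the last step holding because the standing hypothesis already makes $\mu$ a $\tfrac1p$-Carleson measure. If $\alpha=1/q$, then $\alpha q=1$ and Lemma~\ref{Lm1.3}(ii) gives $\bigl\|(1-tz)^{-\alpha}\bigr\|_{H^q}\asymp\bigl(\log\tfrac{e}{1-t}\bigr)^{1/q}$, so $\|\mathcal{H}_{\mu,\alpha}(f)\|_{H^q}\lesssim\int_{[0,1)}|f(t)|\bigl(\log\tfrac{e}{1-t}\bigr)^{1/q}d\mu(t)$, and the assumption that $\mu$ is a $\tfrac1q$-logarithmic $\tfrac1p$-Carleson measure is exactly what makes $\bigl(\log\tfrac{e}{1-t}\bigr)^{1/q}d\mu(t)$ a $\tfrac1p$-Carleson measure (an elementary dyadic comparison; see \cite{5,6}), whence $\int_{[0,1)}|f(t)|\bigl(\log\tfrac{e}{1-t}\bigr)^{1/q}d\mu(t)\lesssim\|f\|_{H^p}$. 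In all three cases the boundedness then follows from these a priori bounds, or, if one prefers, from the closed graph theorem.

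The main obstacle I expect is the technical bookkeeping in~(i): the uniform-in-$a$ asymptotics $R^{-1,\alpha-1}g_a(z)\asymp(1-a^2)^{1/q'}(1-az)^{-(\alpha+2/q'-1)}$ (needing the coefficient asymptotics above together with a uniform estimate $\sum_{n\ge1}n^{\sigma-1}w^n\asymp(1-w)^{-\sigma}$ for $\sigma>0$, $0\le w<1$), and the careful control of $1-ar^2t$ and $1-at$ on $[a,1)$ as $r\uparrow1$. The various Fubini interchanges and the passage of the $H^q$-norm inside the $\mu$-integral are justified because, in cases~(i) and~(iii), the Carleson hypothesis on $\mu$ is stronger than the one in Lemma~\ref{Th1.1}, under which $\mathcal{H}_{\mu,\alpha}(f)=\mathcal{I}_{\mu,\alpha}(f)$.
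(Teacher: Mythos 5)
Your proposal is correct and follows essentially the same route as the paper: the pairing identity \eqref{eqn3.2} with the test pair $f_a,g_a$ and the Stirling asymptotics $R^{-1,\alpha-1}g_a(z)\asymp(1-a^2)^{1/q'}(1-az)^{-(2/q'+\alpha-1)}$ for necessity, and Minkowski's integral inequality together with Lemma \ref{Lm1.3} and the Carleson-measure embeddings (via \cite[Lemma 3.2]{12} in case (i) and the logarithmic variant in case (iii)) for sufficiency. Your substitution of the elementary H\"older estimate $|\frac{1}{2\pi}\int_0^{2\pi}\overline{F}g\,d\theta|\le M_q(r,F)M_{q'}(r,g)$ for the full $(H^q)^\ast\cong H^{q'}$ duality is only a cosmetic simplification (it is exactly the half of the duality the necessity argument needs), and your explicit labelling of the key display as Minkowski's inequality rather than Fubini is a minor tidying of the paper's own wording.
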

\begin{proof}
(i) Recall the duality theorem \cite{3} for $H^q$ shows that $(H^q)^\ast\cong H^{{q}'}$ and $(H^{{q}'})^\ast\cong H^q (q>1)$, under the pairing
$$
<F,G>=\lim_{r\to1}\frac{1}{2\pi}\int_{0}^{2\pi}\overline{F\left(re^{i\theta}\right)}G\left(e^{i\theta}\right)d\theta,\quad F\in H^q,\ G\in H^{{q}'}.
$$
Therefore, it follows from (\ref{eqn3.2}) that $\mathcal{H}_{\mu,\alpha}:H^p\rightarrow H^q$ is bounded if and only if

\begin{align}\label{eqn-3.6}
\left|\int_{[0,1)}^{}R^{-1,\alpha-1}g(r^2t)\overline{f(t)}d\mu(t)\right|\lesssim\|f\|_{H^p}\|\|g\|_{H^{{q}'}},\quad f\in H^p,g\in {H^{{q}'}}.
\end{align}

Assume that $\mathcal{H}_{\mu,\alpha}:H^p\rightarrow H^q$ is bounded, take the families of text functions
$$
f_a(z)=\frac{(1-a^2)^{\frac{1}{p}}}{(1-az)^{\frac{2}{p}}},\quad g_a(z)=\frac{(1-a^2)^{\frac{1}{{q}'}}}{(1-az)^{\frac{2}{{q}'}}},\quad \frac{1}{2}<a<1.
$$
Then $f_a\in H^p$, $g_a\in H^{{q}'}$, and
$$
\sup\limits_{\frac{1}{2}<a<1}\|f_a\|_{H^p}\lesssim1\quad  and \quad \sup\limits_{\frac{1}{2}<a<1}\|g_a\|_{H^{{q}'}}\lesssim1.
$$
It is noteworthy that
$$
R^{-1,\alpha-1}g_a(t)=(1-a^2)^{\frac{1}{{q}'}}\sum_{n=0}^{\infty}\frac{\Gamma(n+\alpha)\Gamma(n+\frac{2}{{q}'})}{\Gamma(n+1)\Gamma(\alpha)\Gamma(n+1)\Gamma(\frac{2}{{q}'})}(at)^n.
$$
This implies that
$$
R^{-1,\alpha-1}g_a(t)\asymp\frac{(1-a^2)^{\frac{1}{{q}'}}}{(1-at)^{\frac{2}{{q}'}+\alpha-1}}
$$
by Stirling's formula. Taking $r\in[a,1)$ and using (\ref{eqn-3.6}), we have that
$$
\begin{aligned}
1&\gtrsim\sup\limits_{\frac{1}{2}<a<1}\|f_a\|_{H^p}\sup\limits_{\frac{1}{2}<a<1}\|g_a\|_{H^{{q}'}}\\
&\gtrsim\left|\int_{[0,1)}^{}R^{-1,\alpha-1}g_a(r^2t)\overline{f_a(t)}d\mu(t)\right|\\
&\gtrsim\int_{[a,1)}^{}\frac{(1-a^2)^{\frac{1}{{q}'}}}{(1-ar^2t)^{\frac{2}{{q}'}+\alpha-1}}\frac{(1-a^2)^{\frac{1}{p}}}{(1-at)^{\frac{2}{p}}}d\mu(t)\\
&\gtrsim\frac{1}{(1-a^2)^{\frac{1}{p}+\frac{1}{{q}'}+\alpha-1}}\mu([a,1)).
\end{aligned}
$$
This implies that $\mu$ is a $\left(\frac{1}{p}+\frac{1}{{q}'}+\alpha-1\right)$-Carleson measure.

Conversely, if $\mu$ is a $\left(\frac{1}{p}+\frac{1}{{q}'}+\alpha-1\right)$-Carleson measure. Using \cite[Lemma 3.2]{12}, we have that $\frac{d\mu(t)}{(1-t)^{\alpha-\frac{1}{q}}}$ is a $\frac{1}{p}$-Carleson measure. It is easy see that
$$
\int_{[0,1)}^{}\frac{|f(t)|}{(1-t)^{\alpha-\frac{1}{q}}}d\mu(t)\lesssim\|f\|_{H^p},\quad for\ all\ f\in H^p,0<p\le1,
$$
by (\ref{eqn3.1}). This together with Fubini's theorem and Lemma \ref{Lm1.3}, we have that
$$
\begin{aligned}
\left\|\mathcal{H}_{\mu,\alpha}(f)\right\|_{H^q}&\le \sup\limits_{0<r<1}\left\{\frac{1}{2\pi}\int_{0}^{2\pi}\left(\int_{[0,1)}^{}\frac{|f(t)|}{\left|1-tre^{i\theta}\right|^\alpha}d\mu(t)\right)^qd\theta\right\}^{\frac{1}{q}}\\
&=\sup\limits_{0<r<1}\int_{[0,1)}^{}|f(t)|\left(\frac{1}{2\pi}\int_{0}^{2\pi}\frac{d\theta}{\left|1-tre^{i\theta}\right|^{q\alpha}}\right)^{\frac{1}{q}}d\mu(t)\\
&\lesssim\int_{[0,1)}^{}\frac{|f(t)|}{(1-t)^{\alpha-\frac{1}{q}}}d\mu(t)\lesssim\|f\|_{H^p}.
\end{aligned}
$$
Therefore, $\mathcal{H}_{\mu,\alpha}(H^p)\subset H^q$. The closed graph theorem implies that $\mathcal{H}_{\mu,\alpha}:H^p\rightarrow H^q$ is bounded.

(ii) If $\alpha<\frac{1}{q}$, using Fubini's theorem, Lemma \ref{Lm1.3} and (\ref{eqn3.1}) we have that
$$
\begin{aligned}
\left\|\mathcal{H}_{\mu,\alpha}(f)\right\|_{H^q}&\le \sup\limits_{0<r<1}\left\{\frac{1}{2\pi}\int_{0}^{2\pi}\left(\int_{[0,1)}^{}\frac{|f(t)|}{\left|1-tre^{i\theta}\right|^\alpha}d\mu(t)\right)^qd\theta\right\}^{\frac{1}{q}}\\
&=\sup\limits_{0<r<1}\int_{[0,1)}^{}|f(t)|\left(\frac{1}{2\pi}\int_{0}^{2\pi}\frac{d\theta}{\left|1-tre^{i\theta}\right|^{q\alpha}}\right)^{\frac{1}{q}}d\mu(t)\\
&\lesssim\int_{[0,1)}^{}|f(t)|d\mu(t)\lesssim\|f\|_{H^p}.
\end{aligned}
$$
Therefore, $\mathcal{H}_{\mu,\alpha}:H^p\rightarrow H^q$ is bounded.

(iii) Since $\mu$ is a $\frac{1}{q}$-logarithmic $\frac{1}{p}$-Carleson measure. Using \cite[Proposition 2.5]{10}, we have that $\left ( \log\frac{e}{1-t} \right ) ^{\frac{1}{q}} d\mu(t)$ is a $\frac{1}{p}$-Carleson measure. It is easy see that
$$
\int_{[0,1)}^{}\frac{|f(t)|}{\left ( \log\frac{e}{1-t} \right ) ^{\frac{1}{q}}  }d\mu(t)\lesssim\|f\|_{H^p},\quad for\ all\ f\in H^p,0<p\le1.
$$
By Fubini's theorem and Lemma \ref{Lm1.3} we have that
$$
\begin{aligned}
\left\|\mathcal{H}_{\mu,\alpha}(f)\right\|_{H^q}&\le \sup\limits_{0<r<1}\left\{\frac{1}{2\pi}\int_{0}^{2\pi}\left(\int_{[0,1)}^{}\frac{|f(t)|}{\left|1-tre^{i\theta}\right|^\alpha}d\mu(t)\right)^qd\theta\right\}^{\frac{1}{q}}\\
&=\sup\limits_{0<r<1}\int_{[0,1)}^{}|f(t)|\left(\frac{1}{2\pi}\int_{0}^{2\pi}\frac{d\theta}{\left|1-tre^{i\theta}\right|^{q\alpha}}\right)^{\frac{1}{q}}d\mu(t)\\
&\lesssim\int_{[0,1)}^{}|f(t)|\left ( \log\frac{e}{1-t} \right ) ^{\frac{1}{q}} d\mu(t)\lesssim\|f\|_{H^p}.
\end{aligned}
$$
Therefore, $\mathcal{H}_{\mu,\alpha}:H^p\rightarrow H^q$ is bounded.
\end{proof}

\begin{theorem}\label{th3.7}
Suppose that $1<p\le q<\infty$  and $\alpha>1$. Let $\mu$ be a positive Borel measure on $[0,1)$ which satisfies the condition in Lemma \ref{Th1.1}. If $\int_{[0,1)}^{}\frac{1}{(1-t)^{\frac{1}{p}+\frac{1}{{q}'}+\alpha -1}}d\mu(t)<\infty$, then $\mathcal{H}_{\mu,\alpha}:H^p\rightarrow H^q$ is bounded.
\end{theorem}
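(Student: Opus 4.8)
The plan is to derive the boundedness directly, without invoking duality, by combining the integral representation of $\mathcal{H}_{\mu,\alpha}$ with a pointwise growth estimate for $H^p$ functions and Minkowski's integral inequality. Since $1<p<\infty$ and $\mu$ satisfies the hypothesis of Lemma \ref{Th1.1}, the measure $\mu$ is a $1$-Carleson measure and Lemma \ref{Th1.1} lets us use the representation
$$
\mathcal{H}_{\mu,\alpha}(f)(z)=\int_{[0,1)}\frac{f(t)}{(1-tz)^{\alpha}}\,d\mu(t),\qquad z\in\mathbb{D},
$$
for every $f\in H^p$. I would also recall the standard growth estimate (\ref{eqn-3.4}), namely $|f(t)|\lesssim\|f\|_{H^p}(1-t)^{-1/p}$ for $t\in[0,1)$.

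Next I would fix $0<r<1$ and estimate $M_q(r,\mathcal{H}_{\mu,\alpha}(f))$. Applying Minkowski's inequality in $L^q(d\theta/2\pi)$ to move the $\theta$-norm inside the integral against $d\mu$ gives
$$
M_q\big(r,\mathcal{H}_{\mu,\alpha}(f)\big)\le\int_{[0,1)}|f(t)|\left(\frac{1}{2\pi}\int_0^{2\pi}\frac{d\theta}{|1-tre^{i\theta}|^{q\alpha}}\right)^{1/q}d\mu(t).
$$
Because $\alpha>1$ forces $q\alpha-1>0$, Lemma \ref{Lm1.3}(i) gives $\frac{1}{2\pi}\int_0^{2\pi}|1-tre^{i\theta}|^{-q\alpha}\,d\theta\asymp(1-tr)^{1-q\alpha}$, so the inner factor is comparable to $(1-tr)^{1/q-\alpha}\le(1-t)^{1/q-\alpha}$, where the last inequality uses $1-tr\ge1-t$ together with $\alpha-1/q>0$.

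Inserting the growth estimate for $f$ then yields, uniformly in $r$,
$$
M_q\big(r,\mathcal{H}_{\mu,\alpha}(f)\big)\lesssim\int_{[0,1)}\frac{|f(t)|}{(1-t)^{\alpha-1/q}}\,d\mu(t)\lesssim\|f\|_{H^p}\int_{[0,1)}\frac{d\mu(t)}{(1-t)^{\frac1p+\alpha-\frac1q}}.
$$
Since $\frac1p+\alpha-\frac1q=\frac1p+\frac1{q'}+\alpha-1$, the final integral is precisely the quantity assumed finite; taking the supremum over $r$ shows $\mathcal{H}_{\mu,\alpha}(f)\in H^q$ with $\|\mathcal{H}_{\mu,\alpha}(f)\|_{H^q}\lesssim\|f\|_{H^p}$, so $\mathcal{H}_{\mu,\alpha}:H^p\to H^q$ is bounded (the closed graph theorem may be invoked to conclude, though the displayed estimate already gives it). I do not anticipate a genuine obstacle: the only things to watch are the exponent bookkeeping $\frac1p+\frac1{q'}+\alpha-1=\frac1p+\alpha-\frac1q$ and the role of the hypothesis $\alpha>1$, which is exactly what keeps $q\alpha-1>0$ so that case (i), rather than (ii) or (iii), of Lemma \ref{Lm1.3} applies. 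Note that the argument does not even use $p\le q$.
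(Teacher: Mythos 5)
Your argument is correct. Its first half---invoking the integral representation from Lemma \ref{Th1.1}, Minkowski's integral inequality in $L^q(d\theta)$, and Lemma \ref{Lm1.3}(i) (applicable because $\alpha>1$ forces $q\alpha>1$) to reduce everything to the estimate $\|\mathcal{H}_{\mu,\alpha}(f)\|_{H^q}\lesssim\int_{[0,1)}|f(t)|(1-t)^{-(\alpha-1/q)}\,d\mu(t)$---is exactly what the paper does. You then diverge at the final step: the paper bounds this last integral via H\"older's inequality with a tailored exponent $s$ chosen so that $\tfrac{s}{p}=(\alpha-\tfrac1q)s'=\tfrac1p+\tfrac1{q'}+\alpha-1$, combined with the Carleson embedding $\bigl(\int_{[0,1)}|f|^s\,d\mu\bigr)^{1/s}\lesssim\|f\|_{H^p}$ from \cite[Theorem 9.4]{4}; that embedding requires $s\ge p$, which is where the hypothesis $p\le q$ is quietly used. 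You instead insert the elementary pointwise growth bound $|f(t)|\lesssim\|f\|_{H^p}(1-t)^{-1/p}$, which converts the integral directly into $\|f\|_{H^p}\int_{[0,1)}(1-t)^{-(\frac1p+\frac1{q'}+\alpha-1)}\,d\mu(t)$, finite by hypothesis. Your route is shorter, avoids the embedding theorem entirely, and, as you note, uses neither $p\le q$ nor anything beyond $\alpha>1/q$; the trade-off is that the pointwise bound is cruder than the $L^s(d\mu)$ embedding, so it would not adapt to proving boundedness under a weaker Carleson-type hypothesis on $\mu$, for which the paper's H\"older argument is the natural template. Under the stated integrability assumption, however, both arguments give the same conclusion and yours is sound.
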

\begin{proof}
%

Suppose that $\int_{[0,1)}^{}\frac{1}{(1-t)^{\frac{1}{p}+\frac{1}{{q}'}+\alpha -1}}d\mu(t)<\infty$. Setting $s=1+\frac{(\alpha-q)p}{q}$, then ${s}'=1+\frac{q}{(\alpha-q)p}$ is the conjugate exponent of $s$ and $\frac{1}{p}+\frac{1}{{q}'}+\alpha -1=\frac{1}{p}+\alpha-\frac{1}{q}=\frac{s}{p}=\left(\alpha-\frac{1}{q}\right){s}'$. Then, using \cite[Theorem 9.4]{4} we have
\begin{align}\label{eqn3.8}
\left(\int_{[0,1)}^{}|f(t)|^sd\mu(t)\right)^{\frac{1}{s}}\lesssim\|f\|_{H^p}, \quad for \ all \ f\in H^p.
\end{align}
Since $\alpha>\frac{1}{q}$, by Fubini's theorem, H$\ddot{o}$lder's inequality, Lemma \ref{Lm1.3} and (\ref{eqn3.8}) we have that
$$
\begin{aligned}
\left\|\mathcal{H}_{\mu,\alpha}(f)\right\|_{H^q}&\le \sup\limits_{0<r<1}\left\{\frac{1}{2\pi}\int_{0}^{2\pi}\left(\int_{[0,1)}^{}\frac{|f(t)|}{\left|1-tre^{i\theta}\right|^\alpha}d\mu(t)\right)^qd\theta\right\}^{\frac{1}{q}}\\
&=\sup\limits_{0<r<1}\int_{[0,1)}^{}|f(t)|\left(\frac{1}{2\pi}\int_{0}^{2\pi}\frac{d\theta}{\left|1-tre^{i\theta}\right|^{q\alpha}}\right)^{\frac{1}{q}}d\mu(t)\\
&\lesssim\int_{[0,1)}^{}\frac{|f(t)|}{(1-t)^{\alpha-\frac{1}{q}}}d\mu(t)\\
&\le\left(\int_{[0,1)}^{}|f(t)|^sd\mu(t)\right)^{\frac{1}{s}}\left(\int_{[0,1)}^{}\frac{1}{(1-t)^{\left(\alpha-\frac{1}{q}\right){s}'}}d\mu(t)\right)^{\frac{1}{{s}'}}\\
&\lesssim\|f\|_{H^p}.
\end{aligned}
$$
Therefore, $\mathcal{H}_{\mu,\alpha}:H^p\rightarrow H^q$ is bounded.
\end{proof}

\begin{theorem}\label{th-3.4}
For $1\le p\le2$ and $\alpha>1$, suppose that $\mu$ is a positive Borel measure on $[0,1)$ which satisfies the condition in Lemma \ref{Th1.1}. Then $\mathcal{H}_{\mu,\alpha}$ is a bounded operator in $H^p$ if and only if $\mu$ is an $\alpha$-Carleson measure.
\end{theorem}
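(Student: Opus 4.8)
The plan is to prove the two implications separately: for necessity, feed the operator suitable test functions; for sufficiency, establish boundedness only at the endpoints $p=1$ and $p=2$ and then interpolate, since the genuinely new content lies in the open range $1<p<2$.

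For necessity I would assume $\mathcal{H}_{\mu,\alpha}\colon H^p\to H^p$ is bounded and use $f_a(z)=(1-a^2)^{1/p}(1-az)^{-2/p}$, $\tfrac12<a<1$, for which $\|f_a\|_{H^p}\asymp 1$ by Lemma \ref{Lm1.3}. By Lemma \ref{Th1.1} the image is the integral $\mathcal{H}_{\mu,\alpha}(f_a)(z)=\int_{[0,1)}f_a(t)(1-tz)^{-\alpha}d\mu(t)$, whose integrand is positive when $z=a$; evaluating there and restricting the integral to $[a,1)$ (where $1-at\le 1-a^2$) gives $\mathcal{H}_{\mu,\alpha}(f_a)(a)\gtrsim (1-a^2)^{-1/p-\alpha}\mu([a,1))$, while the growth estimate (\ref{eqn-3.4}) gives $|\mathcal{H}_{\mu,\alpha}(f_a)(a)|\lesssim (1-a^2)^{-1/p}\|\mathcal{H}_{\mu,\alpha}\|$. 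Comparing these two bounds yields $\mu([a,1))\lesssim(1-a)^\alpha$, i.e. $\mu$ is an $\alpha$-Carleson measure. (Applying Lemma \ref{lm2.7} to the Taylor coefficients of $\mathcal{H}_{\mu,\alpha}(f_a)$ gives an alternative route to the same inequality.)

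For sufficiency, assume $\mu$ is $\alpha$-Carleson; since $\alpha>1$, $\mu$ is then $1$-Carleson, so $\mathcal{H}_{\mu,\alpha}$ is a well-defined linear operator on $H^1\supset H^p\supset H^2$ (Lemma \ref{Th1.1}). I would show it is bounded on $H^1$ and on $H^2$ and then invoke complex interpolation of Hardy spaces, $[H^1,H^2]_\theta=H^{p_\theta}$ with $1/p_\theta=1-\theta/2$ (so $\theta\mapsto p_\theta$ maps $(0,1)$ onto $(1,2)$), to get boundedness on $H^p$ for all $1\le p\le 2$. The endpoint $p=1$ is immediate: it is exactly Theorem \ref{Th1.2}(i) with $p=1$, since there $\tfrac1p+\alpha-1=\alpha$. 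For $p=2$ I would argue on the coefficient side: $\mathcal{H}_{\mu,\alpha}(f)=\sum_n c_n z^n$ with $c_n=\sum_k\mu_{n,k,\alpha}a_k$ and $\mu_{n,k,\alpha}=\frac{\Gamma(n+\alpha)}{\Gamma(n+1)\Gamma(\alpha)}\mu_{n+k}$, $\mu_m=\int_{[0,1)}t^m d\mu(t)$. By Stirling $\frac{\Gamma(n+\alpha)}{\Gamma(n+1)\Gamma(\alpha)}\asymp (n+1)^{\alpha-1}$, and integrating $t^m$ by parts against $\mu([t,1))\lesssim(1-t)^\alpha$ gives $\mu_m\lesssim(m+1)^{-\alpha}$; since $\alpha-1\ge0$, this gives $0\le\mu_{n,k,\alpha}\lesssim (n+1)^{\alpha-1}(n+k+1)^{-\alpha}\le(n+k+1)^{-1}$. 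Hence $\mathcal{H}_{\mu,\alpha}$ is dominated entrywise by a constant multiple of the classical Hilbert matrix $\big(\tfrac1{n+k+1}\big)_{n,k\ge0}$, which is bounded on $\ell^2$; domination of nonnegative matrices then gives boundedness of $\mathcal{H}_{\mu,\alpha}$ on $\ell^2$, i.e. $\|\mathcal{H}_{\mu,\alpha}(f)\|_{H^2}\lesssim\|f\|_{H^2}$.

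The main obstacle is precisely $1<p<2$, and the role of the argument above is to avoid handling it head-on. What comes "for free" from Fubini plus Minkowski's inequality in $L^p(d\theta)$ applied to the integral representation — as in the proofs of Theorems \ref{th3.3} and \ref{th3.7} — is only $\|\mathcal{H}_{\mu,\alpha}(f)\|_{H^p}\lesssim\int_{[0,1)}|f(t)|(1-t)^{-(\alpha-1/p)}d\mu(t)$, and for $1<p<2$ the right-hand side is \emph{not} bounded by $\|f\|_{H^p}$: it fails by exactly a critical power, as one sees testing on $f(z)=(1-z)^{-s}$ with $s\uparrow 1/p$. Passing through the endpoints $p=1,2$ and interpolating sidesteps this. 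An alternative that stays in the duality framework of Theorem \ref{Th1.2}: since $(H^p)^*=H^{p'}$ for $1<p<\infty$, boundedness is equivalent to $\big|\int_{[0,1)}R^{-1,\alpha-1}g(r^2t)\overline{f(t)}d\mu(t)\big|\lesssim\|f\|_{H^p}\|g\|_{H^{p'}}$; splitting this integral by Hölder's inequality with the weight $(1-t)^{(\alpha-1)/p}$, the $f$-factor is controlled because $\frac{d\mu(t)}{(1-t)^{\alpha-1}}$ is a $1$-Carleson measure, and the $g$-factor by the fact that $(1-t)^{(\alpha-1)(p'-1)}d\mu(t)$ is a Carleson measure for the weighted Bergman space $A^{p'}_{(\alpha-1)p'-1}$ together with the fractional Littlewood--Paley inequality $\int_{\mathbb{D}}|R^{-1,\alpha-1}g(z)|^{p'}(1-|z|)^{(\alpha-1)p'-1}dA(z)\lesssim\|g\|_{H^{p'}}^{p'}$, which is the genuinely delicate input and is valid exactly for $p'\ge 2$.
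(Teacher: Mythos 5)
Your proposal follows essentially the same route as the paper: boundedness at the endpoints $p=1$ (quoting Theorem \ref{Th1.2}(i), where $\tfrac1p+\alpha-1=\alpha$) and $p=2$ (via the coefficient bound $\mu_{n,k,\alpha}\lesssim (n+1)^{\alpha-1}(n+k+1)^{-\alpha}\le (n+k+1)^{-1}$ together with Hilbert's inequality), followed by complex interpolation of $H^1$ and $H^2$ for the range $1<p<2$. The only cosmetic difference is in the necessity, where you evaluate $\mathcal{H}_{\mu,\alpha}(f_a)$ at the point $z=a$ and invoke the growth estimate (\ref{eqn-3.4}) rather than the duality pairing the paper alludes to; both are correct and equally short.
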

\begin{proof}
If $p=1$, it follows from Theorem \ref{Th1.2} (i) that $\mathcal{H}_{\mu,\alpha}$ is a bounded operator in $H^1$ if and only if $\mu$ is an $\alpha$-Carleson measure.

If $p=2$, the proof of the necessity is analogous to Theorem \ref{Th1.2}. For the sufficiency, set $f(z)=\sum_{n=0}^{\infty}a_nz^n\in H^2$, then $\|f\|_{H^2}^2=\sum_{n=0}^{\infty}|a_n|^2$. Since $\mu$ is an $\alpha$-Carleson measure, we obtain
$$
|\mu_{n,k,\alpha}|=\frac{\Gamma(n+\alpha)}{\Gamma(n+1)\Gamma(\alpha)}|\mu_{n+k}|\lesssim n^{\alpha-1}\frac{1}{(n+k+1)^{\alpha}}.
$$
By this and Hilbert's inequality, we obtain that
$$
\begin{aligned}
\|\mathcal{H}_{\mu,\alpha}(f)\|_{H^2}^2&=\sum_{n=0}^{\infty}\left|\sum_{k=0}^{\infty}\mu_{n,k,\alpha}a_k\right|^2\\
&\le\sum_{n=0}^{\infty}\left(\sum_{k=0}^{\infty}\left|\mu_{n,k,\alpha}\right|\left|a_k\right|\right)^2\\
&\lesssim\sum_{n=0}^{\infty}n^{2(\alpha-1)}\left(\sum_{k=0}^{\infty}\frac{|a_k|}{(n+k+1)^{\alpha}}\right)^2\\
&\lesssim\sum_{k=0}^{\infty}|a_k|^2=\|f\|_{H^2}^2.
\end{aligned}
$$
It follows that  $\mathcal{H}_{\mu,\alpha}$ is a bounded operator in $H^2$. The complex interpolation theorem (see \cite[Theorem 2.4]{21}) implies that
$$
H^p=(H^2,H^1)_\theta,\quad  if \ 1<p<2\ and\ \theta=\frac{2}{p}-1.
$$
This shows that $\mathcal{H}_{\mu,\alpha}$ is a bounded operator in $H^p(1\le p\le2)$.
\end{proof}

\begin{remark}
 From \cite[Theorem 4.4]{17}, we know that, for $1\leq p\leq 2$, $\mathcal{H}_{\mu, 2}$ is a bounded operator on $H^p$ if and only if $\mu$ is a 2-Carleson measure. Theorem \ref{th-3.4} is a generation of Theorem 4.4 in \cite{17}.
\end{remark}
\section{Compactness of $\mathcal{H}_{\mu,\alpha }$ acting on $H^p$}
In this section, we characterize the compactness of the Generalized Hilbert operator $\mathcal{H}_{\mu,\alpha }$.

\begin{theorem}\label{Th3.5}  
For $0<p\le1$ and let $\mu$ be a positive Borel measure on $[0,1)$ which satisfies the condition in Lemma \ref{Th1.1}.

(i) If $\alpha>1$, then $\mathcal{H}_{\mu,\alpha}:H^p\rightarrow H^1$ is compact if and only if $\mu$ is a vanishing $\left(\frac{1}{p}+\alpha-1\right)$-Carleson measure;

(ii) If $0<\alpha<1$, then $\mathcal{H}_{\mu,\alpha}:H^p\rightarrow H^1$ is compact if and only if $\mu$ is a vanishing $\frac{1}{p}$-Carleson measure.
\end{theorem}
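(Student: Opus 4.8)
The plan is to argue in close analogy with the proof of Theorem~\ref{Th1.2}, now invoking the sequential criterion of Lemma~\ref{lm2.5}. In each case the relevant Carleson hypothesis already forces $\mathcal{H}_{\mu,\alpha}:H^p\to H^1$ to be bounded (Theorem~\ref{Th1.2}), so compactness is equivalent to $\|\mathcal{H}_{\mu,\alpha}(f_n)\|_{H^1}\to0$ whenever $\{f_n\}$ is bounded in $H^p$ and $f_n\to0$ uniformly on compact subsets of $\mathbb{D}$. Write $\lambda=\frac{1}{p}+\alpha-1$ in case (i) and $\lambda=\frac{1}{p}$ in case (ii).

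For necessity, feed the operator the test functions $f_a(z)=(1-a^{2})^{1/p}(1-az)^{-2/p}$ of (\ref{eqn-abc}). These are uniformly bounded in $H^p$ and tend to $0$ uniformly on compact sets as $a\to1^-$, so Lemma~\ref{lm2.5} gives $\|\mathcal{H}_{\mu,\alpha}(f_a)\|_{H^1}\to0$. Pairing against $g_a(z)=\log\frac{e}{1-az}\in VMOA$ through the Fefferman duality and identity (\ref{eqn3.2}) exactly as in Theorem~\ref{Th1.2}, and using $R^{-1,\alpha-1}g_a(t)\asymp(1-at)^{-(\alpha-1)}$ when $\alpha>1$ (resp.\ $R^{-1,\alpha-1}g_a(t)\asymp1$ when $0<\alpha<1$), monotone convergence in $r$ inside (\ref{eqn3.2}) yields
$$
\frac{\mu([a,1))}{(1-a^{2})^{\lambda}}\lesssim\left|\langle\mathcal{H}_{\mu,\alpha}(f_a),g_a\rangle\right|\le\|\mathcal{H}_{\mu,\alpha}(f_a)\|_{H^1}\,\|g_a\|_{BMOA}\longrightarrow0,
$$
which is precisely the vanishing $\lambda$-Carleson condition.

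For sufficiency, let $\mu$ be a vanishing $\lambda$-Carleson measure. Fix $\varepsilon>0$ and pick $\delta\in(0,1)$ with $\mu([t,1))\le\varepsilon(1-t)^{\lambda}$ for $t\in[\delta,1)$; split $\mu=\mu_1+\mu_2$ with $\mu_1=\mu|_{[0,\delta]}$ and $\mu_2=\mu|_{[\delta,1)}$, both of which satisfy the hypothesis of Lemma~\ref{Th1.1}, so $\mathcal{H}_{\mu,\alpha}(f)=\mathcal{H}_{\mu_1,\alpha}(f)+\mathcal{H}_{\mu_2,\alpha}(f)$ with the integral representation. Given $\{f_n\}$ bounded in $H^p$ with $f_n\to0$ on compact sets, Fubini's theorem and Lemma~\ref{Lm1.3} bound $\|\mathcal{H}_{\mu_1,\alpha}(f_n)\|_{H^1}$ by $C_\delta\,\mu([0,1))\sup_{[0,\delta]}|f_n|$, which tends to $0$ as $n\to\infty$ for $\delta$ fixed, where $C_\delta=(1-\delta)^{-(\alpha-1)}$ if $\alpha>1$ and $C_\delta=1$ if $0<\alpha<1$. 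For the tail, $\mu_2$ is a $\lambda$-Carleson measure of constant $\lesssim\varepsilon$; when $\alpha>1$, \cite[Lemma 3.2]{12} makes $\frac{d\mu_2(t)}{(1-t)^{\alpha-1}}$ a $\frac{1}{p}$-Carleson measure of constant $\lesssim\varepsilon$, and the Carleson embedding of $H^p$ into $L^1$ of such a measure—with embedding constant comparable to the Carleson constant—together with the estimate of Theorem~\ref{Th1.2}(i) gives $\|\mathcal{H}_{\mu_2,\alpha}(f_n)\|_{H^1}\lesssim\varepsilon$; when $0<\alpha<1$ one argues directly as in Theorem~\ref{Th1.2}(ii) to the same effect. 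Hence $\limsup_n\|\mathcal{H}_{\mu,\alpha}(f_n)\|_{H^1}\lesssim\varepsilon$, and letting $\varepsilon\to0$ finishes the proof via Lemma~\ref{lm2.5}.

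The only step requiring genuine care is this last tail estimate: one must verify that the embedding and operator-norm inequalities imported from Theorem~\ref{Th1.2} and from \cite[Lemma 3.2]{12} carry constants depending \emph{linearly} on the relevant Carleson constant, so the restriction $\mu_2$ truly yields a small-norm operator rather than merely a bounded one. A minor additional point is that for $0<\alpha<1$ the operator $R^{-1,\alpha-1}$ is not a bounded multiplier on $\mathcal{B}$, so—as already in Theorem~\ref{Th1.2}(ii)—the necessity computation must be carried out on the explicit function $g_a=\log\frac{e}{1-az}$, where $R^{-1,\alpha-1}g_a\asymp1$ can be read directly off the Taylor series.
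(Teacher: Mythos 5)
Your argument is correct and follows essentially the same route as the paper: necessity via the test functions $f_a$, the duality pairing (\ref{eqn3.2}) with $g_a=\log\frac{e}{1-az}$, and Lemma \ref{lm2.5}; sufficiency by splitting off the tail of $\mu$ near $1$ and showing the tail operator has small norm. The one step you flag as delicate—that the tail's embedding constant is controlled by its Carleson constant—is exactly what the paper packages as the equivalence between the vanishing $\frac{q}{p}$-Carleson condition and $\mathcal{N}(\mu_s)\to0$ as $s\to1^-$, so your concern is resolved the same way there.
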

\begin{proof}
Suppose $0<p\le1\le q<\infty$, if $\mu$ is a $\frac{q}{p}$-Carleson measure, the identity mapping $i$ is well defined from $H^p$ into $L^q(\mathbb{D},\mu)$ and let the norm of $i$ denoted by $\mathcal{N}(\mu)$. For $0<s<1$, write
\begin{align}\label{eqn-mu1}
d\mu_s(z)=\chi_{s<|z|<1}(t)d\mu(t).
\end{align}
Thus, $\mu$ is a vanishing $\frac{q}{p}$-Carleson measure if and only if
\begin{align}\label{eqn-mu}
\mathcal{N}(\mu_s)\to0,\quad as\ s\to1^-.
\end{align}

(i) Let $\{a_n\}\subset[0,1)$ be any sequence with $a_n\to1$ and $f_{a_n}(z)$ be defined as in (\ref{eqn-abc}). Then $f_{a_n}\in H^p$, $\sup\limits_{n\ge1}\|f_{a_n}\|_{H^p}\asymp1$ and $\{f_{a_n}\}$ converges to 0 uniformly on any compact subset of $\mathbb{D}$. Since  $\mathcal{H}_{\mu,\alpha}:H^p\rightarrow H^1$ is compact, by Lemma \ref{lm2.5} we have that
$$
\lim_{n\to\infty}\|\mathcal{H}_{\mu,\alpha}(f_{a_n})\|_{H^1}=0.
$$
Then, together with (\ref{eqn-3.5}) we obtain that
$$
\lim_{n\to\infty}\left|\int_{[0,1)}^{}R^{-1,\alpha-1}g(r^2t)f_{a_n}(t)d\mu(t)\right|=0,\quad for\ all\ g\in VMOA.
$$
Take
$$
g_{a_n}(z)=\log\frac{e}{1-a_nz}.
$$
Thus,
$$
\begin{aligned}
\left|\int_{[0,1)}^{}R^{-1,\alpha-1}g_{a_n}(r^2t)\overline{f_{a_n}(t)}d\mu(t)\right|&\asymp\int_{[0,1)}^{}\frac{1}{(1-a_nr^2t)^{\alpha-1}}\frac{(1-a_n^2)^{\frac{1}{p}}}{(1-a_nt)^{\frac{2}{p}}}d\mu(t)\\
&\gtrsim(1-a_n^2)^{\frac{1}{p}}\int_{[a_n,1)}^{}\frac{1}{(1-a_nt)^{\frac{2}{p}+\alpha-1}}d\mu(t)\\
&\gtrsim\frac{\mu([a_n,1))}{(1-a_n)^{\frac{1}{p}+\alpha-1}}.
\end{aligned}
$$
Since $\{a_n\}$ is an arbitrary sequence on $[0,1)$, then
$$
\lim_{t\rightarrow1^-}\frac{\mu([t,1))}{(1-t)^{\frac{1}{p}+\alpha-1}}=0.
$$
Thus, $\mu$ is a vanishing $\left(\frac{1}{p}+\alpha-1\right)$-Carleson measure.

Suppose that $\mu$ is a vanishing $\left(\frac{1}{p}+\alpha-1\right)$-Carleson measure. Let $\{f_n\}_{n=1}^\infty$ be a bounded sequence of $H^p$ and $\lim_{n\to\infty}\{f_n\}=0$ on any compact subset of $\mathbb{D}$. By Lemma \ref{lm2.5}, it is suffice to prove that $\mathcal{H}_{\mu,\alpha}(f_n)\to0$ in $H^1$. For every $g\in VMOA,0<s<1$, we decude that
$$
\begin{aligned}
&\left|\int_{[0,1)}^{}R^{-1,\alpha-1}g(r^2t)\overline{f_n(t)}d\mu(t)\right|\\
\le&\left(\int_{[0,s]}^{}+\int_{(s,1)}^{}\right)\left|R^{-1,\alpha-1}g(r^2t)\overline{f_n(t)}d\mu(t)\right|\\
\le&\left(\int_{[0,s]}^{}+\int_{(s,1)}^{}\right)\left|R^{-1,\alpha-1}g(r^2t)\right|\left|f_n(t)\right|d\mu(t)\\
\end{aligned}
$$
Bearing in mind that $ \{f_n\}$ converges to 0 uniformly on every compact subset of $\mathbb{D}$, so we have
$$
\int_{[0,s]}^{}\left|R^{-1,\alpha-1}g(r^2t)\right|\left|f_n(t)\right|d\mu(t)\to0
$$
Since $\frac{d\mu(t)}{(1-t)^{\alpha-1}}$ is a vanishing $\frac{1}{p}$-Carleson measure by \cite[Lemma 3.2]{12}. Then
$$
\begin{aligned}
&\int_{(s,1)}^{}\left|R^{-1,\alpha-1}g(r^2t)\right|\left|f_n(t)\right|d\mu(t)\\
\lesssim&\ \|g\|_{\mathcal{B}}\int_{(s,1)}^{}|f_n(t)|\frac{d\mu(t)}{(1-r^2t)^{\alpha-1}}\\
\lesssim&\ \|g\|_{BMOA}\int_{[0,1)}^{}|f_n(t)|\frac{d\mu_s(t)}{(1-t)^{\alpha-1}}\\
\lesssim&\ \mathcal{N}(\mu_s)\|g\|_{BMOA}\|f_n(t)\|_{H^p}.
\end{aligned}
$$
Then, using (\ref{eqn-mu}), this also tends to $0$. Therefore, we obtain that
$$
\begin{aligned}
\lim_{n\to\infty}\int_{[0,1)}^{}\left|f_n(t)\right|\left|R^{-1,\alpha-1}g(r^2t)d\mu(t)\right|=0,\quad for\ all \ g\in VMOA.
\end{aligned}
$$
Therefore,
$$
\begin{aligned}
\lim_{n\to\infty}\left|\int_{0}^{2\pi}\overline{\mathcal{H}_{\mu,\alpha}(f_n)(re^{i\theta})}g(re^{i\theta})d\theta\right|=0,\quad for\ all \ g\in VMOA.
\end{aligned}
$$
Thus, $\mathcal{H}_{\mu,\alpha}(f_n)\rightarrow0$ in $H^1$.

(ii) If $0<\alpha<1$, then
$$
R^{-1,\alpha-1}g_a(t)=1+\sum_{n=1}^{\infty}\frac{\Gamma(n+\alpha)}{\Gamma(n+1)\Gamma(\alpha)}\frac{(at)^n}{n}\asymp1.
$$
Arguing as in the proof of (i), we will obtain the necessity.

If $\mu$ is a vanishing $\frac{1}{p}$-Carleson measure and let $\{f_n\}_{n=1}^\infty$ be a bounded sequence of $H^p$ and $\lim_{n\to\infty}\{f_n\}=0$ on any compact subset of $\mathbb{D}$. By Lemma \ref{lm2.5}, it is suffice to prove that $\mathcal{H}_{\mu,\alpha}(f_n)\to0$ in $H^1$. Arguing as in the proof of the boundedness in Theorem \ref{Th1.2} (ii), it implies that
$$
\begin{aligned}
\|\mathcal{H}_{\mu,\alpha}(f_n)\|_{H^1}&\lesssim\int_{[0,1)}^{}|f_n(t)|d\mu(t)\\
&\lesssim\int_{[0,s]}^{}|f_n(t)|d\mu(t)+\int_{(s,1)}^{}|f_n(t)|d\mu(t)\\
&\lesssim\int_{[0,s]}^{}|f_n(t)|d\mu(t)+\int_{[0,1)}^{}|f_n(t)|d\mu_s(t)\\
&\lesssim\int_{[0,s]}^{}|f_n(t)|d\mu(t)+\mathcal{N}(\mu_s)\|f_n(t)\|_{H^p},\quad g\in VMOA.
\end{aligned}
$$
Then, using (\ref{eqn-mu}) and the fact that $\{f_n\}\to0$ uniformly on any compact subset of $\mathbb{D}$, we obtain that this tends to 0. Thus,
$$
\lim_{n\to\infty}\|\mathcal{H}_{\mu,\alpha}(f_n)\|_{H^1}=0
$$
By Lemma \ref{lm2.5}, we conclude that $\mathcal{H}_{\mu,\alpha}:H^p\rightarrow H^1$ is compact.
\end{proof}
\begin{theorem}
For $0<p\le1,1<q<\infty,\alpha>0$, and let $\mu$ be a positive Borel measure on $[0,1)$ which satisfies the condition in Lemma \ref{Th1.1}.

(i) If $\alpha>\frac{1}{q}$, then $\mathcal{H}_{\mu,\alpha}:H^p\rightarrow H^q$ is compact if and only if $\mu$ is a vanishing $\left(\frac{1}{p}+\frac{1}{{q}'}+\alpha-1\right)$-Carleson measure;

(ii) If $\alpha=\frac{1}{q}$ and $\mu$ is a vanishing $\frac{1}{q}$-logarithmic $\frac{1}{p}$-Carleson measure, then $\mathcal{H}_{\mu,\alpha}:H^p\rightarrow H^q$ is compact.
\end{theorem}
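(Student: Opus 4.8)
The plan is to mirror the proof of Theorem~\ref{Th3.5}, replacing the $H^1$--$VMOA$ duality by the $H^q$--$H^{{q}'}$ duality and recycling the one-sided estimates already obtained in the proof of Theorem~\ref{th3.3}. Recall from the opening of the proof of Theorem~\ref{Th3.5} that, for a $\frac{q}{p}$-Carleson measure, vanishing is equivalent to $\mathcal{N}(\mu_s)\to 0$ as $s\to 1^-$, where $d\mu_s=\chi_{s<|z|<1}\,d\mu$ and $\mathcal{N}(\cdot)$ is the norm of the natural embedding. Throughout, $\{f_n\}$ denotes a bounded sequence in $H^p$ converging to $0$ uniformly on compact subsets of $\mathbb{D}$, and we use Lemma~\ref{lm2.5} to reduce compactness to $\|\mathcal{H}_{\mu,\alpha}(f_n)\|_{H^q}\to 0$.

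\emph{Necessity in (i).} Suppose $\mathcal{H}_{\mu,\alpha}\colon H^p\to H^q$ is compact and let $\{a_n\}\subset[0,1)$ be an arbitrary sequence with $a_n\to 1$. Put $f_{a_n}(z)=\frac{(1-a_n^2)^{1/p}}{(1-a_nz)^{2/p}}$ and $g_{a_n}(z)=\frac{(1-a_n^2)^{1/{q}'}}{(1-a_nz)^{2/{q}'}}$; then $\sup_n\|f_{a_n}\|_{H^p}\lesssim1$, $\sup_n\|g_{a_n}\|_{H^{{q}'}}\lesssim1$ (the latter via Lemma~\ref{Lm1.3}), and $f_{a_n}\to 0$ locally uniformly. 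By Lemma~\ref{lm2.5}, $\|\mathcal{H}_{\mu,\alpha}(f_{a_n})\|_{H^q}\to 0$; combining this with the identity (\ref{eqn3.2}) and the duality estimate behind (\ref{eqn-3.6}), for each fixed $r\in(0,1)$ one has
$$\left|\int_{[0,1)}R^{-1,\alpha-1}g_{a_n}(r^2t)\overline{f_{a_n}(t)}\,d\mu(t)\right|\lesssim\|\mathcal{H}_{\mu,\alpha}(f_{a_n})\|_{H^q}\to 0.$$
Choosing $r=a_n$ and running the lower bound from the proof of Theorem~\ref{th3.3}(i) verbatim (using $R^{-1,\alpha-1}g_{a_n}(r^2t)\asymp\frac{(1-a_n^2)^{1/{q}'}}{(1-a_nr^2t)^{2/{q}'+\alpha-1}}$ and $1-a_nr^2t\asymp1-a_n$ on $[a_n,1)$, where $2/{q}'+\alpha-1>0$ since $\alpha>1/q$) gives $\dfrac{\mu([a_n,1))}{(1-a_n)^{1/p+1/{q}'+\alpha-1}}\to 0$, and since $\{a_n\}$ is arbitrary, $\mu$ is a vanishing $\left(\frac1p+\frac1{{q}'}+\alpha-1\right)$-Carleson measure.

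\emph{Sufficiency in (i) and the proof of (ii).} Assume $\mu$ is a vanishing $\left(\frac1p+\frac1{{q}'}+\alpha-1\right)$-Carleson measure. By \cite[Lemma~3.2]{12} (in its vanishing form, which follows by the same argument), $\dfrac{d\mu(t)}{(1-t)^{\alpha-1/q}}$ is a vanishing $\frac1p$-Carleson measure. Fubini's theorem, Minkowski's integral inequality and Lemma~\ref{Lm1.3}, exactly as in the proof of Theorem~\ref{th3.3}(i), give
$$\|\mathcal{H}_{\mu,\alpha}(f_n)\|_{H^q}\lesssim\int_{[0,1)}\frac{|f_n(t)|}{(1-t)^{\alpha-1/q}}\,d\mu(t).$$
Splitting this integral at $s\in(0,1)$, the part over $[0,s]$ tends to $0$ as $n\to\infty$ by uniform convergence on the compact set $[0,s]$, while the part over $(s,1)$ is $\lesssim\mathcal{N}\!\left(\tfrac{d\mu_s}{(1-t)^{\alpha-1/q}}\right)\|f_n\|_{H^p}$, which tends to $0$ as $s\to1^-$; hence $\|\mathcal{H}_{\mu,\alpha}(f_n)\|_{H^q}\to 0$ and Lemma~\ref{lm2.5} yields compactness. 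For (ii), with $\alpha=\frac1q$ and $\mu$ a vanishing $\frac1q$-logarithmic $\frac1p$-Carleson measure, \cite[Proposition~2.5]{10} (again in its vanishing form) shows $\left(\log\frac{e}{1-t}\right)^{1/q}d\mu(t)$ is a vanishing $\frac1p$-Carleson measure, and the identical splitting applied to the bound $\|\mathcal{H}_{\mu,\alpha}(f_n)\|_{H^q}\lesssim\int_{[0,1)}|f_n(t)|\left(\log\frac{e}{1-t}\right)^{1/q}d\mu(t)$ from the proof of Theorem~\ref{th3.3}(iii) finishes the argument.

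\emph{Main obstacle.} The conceptual content is already contained in Theorems~\ref{th3.3} and~\ref{Th3.5}, so the work is essentially bookkeeping: one must check that the transference facts \cite[Lemma~3.2]{12} and \cite[Proposition~2.5]{10} also hold in their vanishing versions (they do, by the same proofs), verify that $f_{a_n}$ and $g_{a_n}$ converge to $0$ locally uniformly and have uniformly bounded norms in $H^p$ and $H^{{q}'}$ respectively, and make sure that the freedom to pick $r=a_n$ (rather than take $r\to1$) in the pairing (\ref{eqn3.2}) is compatible with the lower bound — which it is, since the relevant integrand is comparable for $r,t\in[a_n,1)$. No estimate beyond those already proved in the paper is required.
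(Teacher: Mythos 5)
Your proposal is correct and follows exactly the route the paper intends: the paper's own proof of this theorem is just the remark that it ``can be proved similarly to the proof of Theorem~\ref{Th3.5}'', and what you have written is precisely that adaptation, replacing the $H^1$--$BMOA$ duality by the $H^q$--$H^{q'}$ duality and reusing the test functions and estimates from Theorem~\ref{th3.3}. No discrepancies with the paper's argument.
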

\begin{proof}
(i) and (ii) can be proved similarly to the proof of Theorem \ref{Th3.5}, so we omit the details.
\end{proof}

\begin{theorem}
If $1\le p\le2,\alpha>1$ and let $\mu$ be a positive Borel measure on $[0,1)$ which satisfies the condition in Lemma \ref{Th1.1}. Then $\mathcal{H}_{\mu,\alpha}$ is a compact operator in $H^p$ if and only if $\mu$ is a vanishing $\alpha$-Carleson measure.
\end{theorem}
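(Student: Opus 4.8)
The plan is to mirror the scheme of Theorem~\ref{th-3.4} (the boundedness counterpart): dispose of the endpoint $p=1$ by quoting an earlier result, handle $p=2$ by a Hankel/Hilbert-matrix estimate, and pass to $1<p<2$ by complex interpolation, throughout replacing the phrase ``$\mu$ is an $\alpha$-Carleson measure'' by ``the truncations of $\mu$ near the boundary have arbitrarily small $\alpha$-Carleson norm''. Indeed, for $p=1$ the statement is exactly Theorem~\ref{Th3.5}(i) with $p=1$, since then $\tfrac1p+\alpha-1=\alpha$; so from now on I assume $1<p\le2$.

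For the necessity I would argue as in the necessity part of Theorem~\ref{Th3.5}. Fix a sequence $a_n\uparrow1$ and set $f_{a_n}(z)=\frac{(1-a_n^2)^{1/p}}{(1-a_nz)^{2/p}}$ and $g_{a_n}(z)=\frac{(1-a_n^2)^{1/p'}}{(1-a_nz)^{2/p'}}$; then $\sup_n\|f_{a_n}\|_{H^p}\asymp1$, $\sup_n\|g_{a_n}\|_{H^{p'}}\asymp1$, and $f_{a_n}\to0$ uniformly on compact subsets of $\mathbb{D}$. If $\mathcal{H}_{\mu,\alpha}$ is compact on $H^p$, Lemma~\ref{lm2.5} gives $\|\mathcal{H}_{\mu,\alpha}(f_{a_n})\|_{H^p}\to0$; pairing with $g_{a_n}$, using $(H^p)^\ast\cong H^{p'}$, the identity~(\ref{eqn3.2}), and H\"older's inequality, one obtains $\left|\int_{[0,1)}R^{-1,\alpha-1}g_{a_n}(r^2t)\overline{f_{a_n}(t)}\,d\mu(t)\right|\to0$ uniformly in $r\in[0,1)$. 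By Stirling's formula (exactly as in Theorem~\ref{th3.3}(i)) one has $R^{-1,\alpha-1}g_{a_n}(t)\asymp\frac{(1-a_n^2)^{1/p'}}{(1-a_nt)^{2/p'+\alpha-1}}$, so choosing $r=a_n$ the integral is $\asymp(1-a_n^2)\int_{[0,1)}\frac{d\mu(t)}{(1-a_nt)^{\alpha+1}}\gtrsim\frac{\mu([a_n,1))}{(1-a_n)^{\alpha}}$. Since $\{a_n\}$ is arbitrary, $\mu$ is a vanishing $\alpha$-Carleson measure.

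For the sufficiency, fix $0<s<1$, write $\mu=\mu|_{[0,s]}+\mu_s$ with $\mu_s=\mu|_{(s,1)}$, and use that $\mathcal{H}_{\mu,\alpha}=\mathcal{H}_{\mu|_{[0,s]},\alpha}+\mathcal{H}_{\mu_s,\alpha}$ depends additively on $\mu$. The summand $\mathcal{H}_{\mu|_{[0,s]},\alpha}$ is compact on $H^p$: the kernel $(1-tz)^{-\alpha}$ is bounded by $(1-s)^{-\alpha}$ for $t\in[0,s]$ and $z\in\mathbb{D}$, so if $\{f_n\}$ is $H^p$-bounded with $f_n\to0$ on compacta, then $\mathcal{H}_{\mu|_{[0,s]},\alpha}(f_n)\to0$ in $H^\infty$, hence in $H^p$, and Lemma~\ref{lm2.5} applies. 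It then suffices to prove $\|\mathcal{H}_{\mu_s,\alpha}\|_{H^p\to H^p}\to0$ as $s\to1^-$, for then $\mathcal{H}_{\mu,\alpha}$ is an operator-norm limit of compact operators. On $H^2$ I would rerun the Hilbert-inequality computation of Theorem~\ref{th-3.4}: putting $\varepsilon(s):=\sup_{m\ge0}(m+1)^{\alpha}\int_{(s,1)}t^{m}\,d\mu(t)$ one has $|(\mu_s)_{n,k,\alpha}|\lesssim\varepsilon(s)\,n^{\alpha-1}(n+k+1)^{-\alpha}$, whence $\|\mathcal{H}_{\mu_s,\alpha}\|_{H^2\to H^2}\lesssim\varepsilon(s)$; and $\varepsilon(s)\to0$ because a vanishing $\alpha$-Carleson measure satisfies $\int_{[0,1)}t^{m}\,d\mu(t)=o(m^{-\alpha})$ (controlling large $m$ uniformly in $s$) while $\int_{(s,1)}t^{m}\,d\mu(t)\le\mu((s,1))\to0$ (controlling any fixed finite range of $m$). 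On $H^1$, the quantitative form of Theorem~\ref{th-3.4} with $p=1$ (equivalently Theorem~\ref{Th1.2}(i)) bounds $\|\mathcal{H}_{\mu_s,\alpha}\|_{H^1\to H^1}$ by a multiple of the $\alpha$-Carleson norm of $\mu_s$, which is at most that of $\mu$, so $\sup_{0<s<1}\|\mathcal{H}_{\mu_s,\alpha}\|_{H^1\to H^1}<\infty$. Finally, from $H^p=(H^2,H^1)_{\theta}$ with $\theta=\frac2p-1$ and the interpolation inequality for operator norms, $\|\mathcal{H}_{\mu_s,\alpha}\|_{H^p\to H^p}\le\|\mathcal{H}_{\mu_s,\alpha}\|_{H^2\to H^2}^{\theta}\,\|\mathcal{H}_{\mu_s,\alpha}\|_{H^1\to H^1}^{1-\theta}\to0$, which finishes the proof.

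The main difficulty lies in the sufficiency, at two places. First, proving $\varepsilon(s)\to0$: the vanishing $\alpha$-Carleson hypothesis must be converted into a \emph{uniform-in-$s$} decay estimate for the truncated moments, for which the split at a large index $M$ indicated above is the natural device (combined with the standard fact that a positive measure on $[0,1)$ is vanishing $\alpha$-Carleson precisely when its $m$-th moment is $o(m^{-\alpha})$). Second, the Hilbert/Schur estimate for the kernel $K(n,k)=n^{\alpha-1}(n+k+1)^{-\alpha}$ on $\ell^2$ for $\alpha>1$ — the ``Hilbert's inequality'' invoked in Theorem~\ref{th-3.4}; a Schur test with weights $n^{-1/2}$ and $k^{-1/2}$ works because the resulting Beta-type integrals $\int_0^\infty u^{-1/2}(1+u)^{-\alpha}\,du$ and $\int_0^\infty u^{\alpha-3/2}(1+u)^{-\alpha}\,du$ converge for $\alpha>\tfrac12$. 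A minor but worthwhile remark: this decomposition route avoids invoking any interpolation theorem for \emph{compact} operators, since interpolation is applied only to the operator norm of the tail piece $\mathcal{H}_{\mu_s,\alpha}$ and not to the compact piece $\mathcal{H}_{\mu|_{[0,s]},\alpha}$.
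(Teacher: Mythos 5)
Your proof is correct, but the sufficiency half follows a genuinely different route from the paper's. The paper proves compactness separately at the endpoints: at $p=2$ it runs a sequential argument (writing $\mu_{n,k}=\varepsilon_n(n+k+1)^{-\alpha}$ with $\varepsilon_n\to0$, splitting the matrix sum at a large index $N$, and controlling the tail by Hilbert's inequality), at $p=1$ it quotes Theorem \ref{Th3.5}, and it then obtains $1<p<2$ by interpolating \emph{compactness itself} via the Cwikel--Kalton theorem \cite[Theorem 10]{20}, using that $H^2$ is reflexive. You instead split the measure, $\mu=\mu|_{[0,s]}+\mu_s$, observe that $\mathcal{H}_{\mu|_{[0,s]},\alpha}$ is compact because its kernel is bounded on $[0,s]\times\mathbb{D}$, and show that the tail has small operator norm by interpolating only \emph{operator norms}: $\|\mathcal{H}_{\mu_s,\alpha}\|_{H^2\to H^2}\lesssim\varepsilon(s)\to0$ via the same Hilbert-inequality kernel estimate, together with a uniform $H^1\to H^1$ bound from the quantitative form of Theorem \ref{Th1.2}(i). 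This buys you two things: you never need the compactness-interpolation machinery, and you get a single sufficiency argument valid for all $1<p\le2$ at once. You also supply an explicit duality-based necessity argument for every $1<p\le2$ (with $\frac1p+\frac1{p'}+\alpha-1=\alpha$, consistent with the claimed exponent), whereas the paper only asserts necessity at $p=2$ ``by analogy'' with Theorem \ref{Th3.5} and leaves $1<p<2$ implicit. Two small remarks: in your interpolation inequality the exponents $\theta$ and $1-\theta$ appear swapped relative to the convention $H^p=(H^2,H^1)_\theta$, $\theta=\frac2p-1$ (harmless here, since both exponents are strictly positive for $1<p<2$ and only the $H^2$ factor needs to carry a positive power); and your reduction of the vanishing $\alpha$-Carleson condition to $\mu_m=o(m^{-\alpha})$ uniformly controlling $\varepsilon(s)$ is the same standard moment fact the paper invokes without proof.
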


\begin{proof}
If $p=1$, it follows from Theorem \ref{Th3.5} (i) that $\mathcal{H}_{\mu,\alpha}$ is a bounded operator in $H^1$ if and only if $\mu$ is a vanishing $\alpha$-Carleson measure.

If $p=2$, the proof of the necessity is analogous to Theorem \ref{Th3.5}.

Suppose that $\mu$ is a vanishing $\alpha$-Carleson measure. Let $\{f_s\}_{n=1}^\infty$ be a bounded sequence of $H^2$ and $\lim_{n\to\infty}\{f_s\}=0$ on any compact subset of $\mathbb{D}$. By Lemma \ref{lm2.5}, it is suffice to prove that $\mathcal{H}_{\mu,\alpha}(f_{s})\to0$ in $H^2$. Since $\mu$ is a vanishing $\alpha$-Carleson measure, $\mu_{n,k}=o(\frac{1}{(n+k+1)^\alpha})$ as $n\to\infty$. If
$$
\mu_{n,k}=\frac{\varepsilon_n}{(n+k+1)^\alpha},\quad n=0,1,2,...,
$$
then $\{\varepsilon_n\}\to0$. If, for every $s$,
$$
f_{s}(z)=\sum_{k=0}^{\infty}a_k^{(s)}z^k,\quad z\in\mathbb{D}.
$$
By this and Hilbert's inequality, we obtain
$$
\begin{aligned}
\sum_{n=0}^{\infty}\left|\sum_{k=0}^{\infty}\frac{a_k^{(s)}}{(n+k+1)^{\alpha}}\right|^2\le\pi^2\sum_{k=0}^{\infty}|a_k^{(s)}|^2\le\pi^2.
\end{aligned}
$$
Take $\varepsilon>0$ and then take $N$ such that
$$
n\ge N\quad \Rightarrow\quad \varepsilon_n^2<\frac{\varepsilon}{2\pi^2}.
$$
Then,
$$
\begin{aligned}
\|\mathcal{H}_{\mu,\alpha}(f_{s})\|_{H^2}^2&=\sum_{n=0}^{\infty}\left|\sum_{k=0}^{\infty}\mu_{n,k,\alpha}a_k^{(s)}\right|^2\\
&=\sum_{n=0}^{\infty}n^{2(\alpha-1)}\sum_{k=0}^{\infty}\left|\mu_{n,k}a_k^{(s)}\right|^2\\
&\le\sum_{n=0}^{N}n^{2(\alpha-1)}\left|\sum_{k=0}^{\infty}\mu_{n,k}a_k^{(s)}\right|^2+\sum_{n=N+1}^{\infty}n^{2(\alpha-1)}\left|\sum_{k=0}^{\infty}\mu_{n,k}a_k^{(s)}\right|^2\\
&\lesssim\sum_{n=0}^{N}n^{2(\alpha-1)}\left|\sum_{k=0}^{\infty}\mu_{n,k}a_k^{(s)}\right|^2+\sum_{n=0}^{\infty}n^{2(\alpha-1)}\left|\sum_{k=0}^{\infty}\frac{\varepsilon_na_k^{(s)}}{(n+k+1)^{\alpha}}\right|^2\\
&\le\sum_{n=0}^{N}n^{2(\alpha-1)}\left|\sum_{k=0}^{\infty}\mu_{n,k}a_k^{(s)}\right|^2+\frac{\varepsilon}{2\pi^2}\sum_{n=0}^{\infty}\left|\sum_{k=0}^{\infty}\frac{a_k^{(s)}}{n+k+1}\right|^2\\
&\le\sum_{n=0}^{N}n^{2(\alpha-1)}\left|\sum_{k=0}^{\infty}\mu_{n,k}a_k^{(s)}\right|^2+\frac{\varepsilon}{2}.
\end{aligned}
$$
Then, the fact that $\{f_{s}\}\to0$ uniformly on any compact subset of $\mathbb{D}$, we obtain that
$$
\sum_{n=0}^{N}n^{2(\alpha-1)}\left|\sum_{k=0}^{\infty}\mu_{n,k}a_k^{(s)}\right|^2\to0,\quad as\ s\to\infty.
$$
Then it follows that there exist $s_0\in N$ such that $\|\mathcal{H}_{\mu,\alpha}(f_{s})\|_{H^2}^2<\varepsilon$ for all $j\ge j_0$.\\
Thus,
$$
\lim_{s\to\infty}\|\mathcal{H}_{\mu,\alpha}(f_{s})\|_{H^2}=0.
$$
By Lemma \ref{lm2.5}, we conclude that $\mathcal{H}_{\mu,\alpha}$ is a compact operator in $H^2$.
The complex interpolation theorem implies that
$$
H^p=(H^2,H^1)_\theta,\quad  if \ 1<p<2\ and\ \theta=\frac{2}{p}-1.
$$
Since $H^2$ is reflexive, and $\mathcal{H}_{\mu,\alpha}$ is compact on $H^1$ and $H^2$, using \cite[Theorem 10]{20} shows that $\mathcal{H}_{\mu,\alpha}$ is a compact operator in $H^p(1\le p\le2)$.
\end{proof}

We recall that an operator $S$ on a separable Hilbert space $Y$ is a Hilbert-Schmidt operator if
$$
\sum_{k=0}^{\infty}\|S(e_k)\|_{Y}^2<\infty
$$
for an orthonormal basis $\{e_k\}_{k=0}^\infty$ of $Y$. The finiteness of this sum does not depend on the basis chosen. In \cite{9}, The measure for which $\mathcal{H}_{\mu}$ is a Hilbert-Schmidt operator on $H^2$ has been characterized. As a matter of fact, we will be able to obtain sufficient and necessary condition which $\mathcal{H}_{\mu,\alpha}$ is a Hilbert-Schmidt operator on $H^2$.

\begin{theorem}
For $\alpha>0$ and let $\mu$ be a positive Borel measure on $[0,1)$ which satisfies the condition in Lemma \ref{Th1.1}. Then $\mathcal{H}_{\mu,\alpha}$ is a Hilbert-Schmidt operator on $H^2$ if and only if
\begin{align}\label{Hil-Sch}
\int_{[0,1)}^{}\frac{\mu([t,1))}{(1-t)^{2\alpha}}d\mu(t)<\infty.
\end{align}
\end{theorem}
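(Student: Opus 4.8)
The plan is to compute the Hilbert--Schmidt norm of $\mathcal{H}_{\mu,\alpha}$ directly against the standard orthonormal basis $\{z^k\}_{k\ge0}$ of $H^2$, reduce the Hilbert--Schmidt condition to an explicit series in the moments $\mu_m=\int_{[0,1)}t^md\mu(t)$, and then recognize that series as being comparable to the integral in (\ref{Hil-Sch}). Since $\mathcal{H}_{\mu,\alpha}(z^k)(z)=\sum_{n=0}^\infty\mu_{n,k,\alpha}z^n$ and $\|z^n\|_{H^2}=1$, the operator is Hilbert--Schmidt on $H^2$ if and only if
$$\sum_{k=0}^\infty\|\mathcal{H}_{\mu,\alpha}(z^k)\|_{H^2}^2=\sum_{k=0}^\infty\sum_{n=0}^\infty|\mu_{n,k,\alpha}|^2<\infty .$$
Substituting $\mu_{n,k,\alpha}=\frac{\Gamma(n+\alpha)}{\Gamma(n+1)\Gamma(\alpha)}\mu_{n+k}$, using $\frac{\Gamma(n+\alpha)}{\Gamma(n+1)\Gamma(\alpha)}\asymp(n+1)^{\alpha-1}$ by Stirling's formula, and grouping the (nonnegative) terms according to $m=n+k$, this double series satisfies
$$\sum_{k=0}^\infty\sum_{n=0}^\infty|\mu_{n,k,\alpha}|^2\asymp\sum_{m=0}^\infty\Big(\sum_{n=0}^m(n+1)^{2(\alpha-1)}\Big)\mu_m^2 .$$

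Next I would match the integral in (\ref{Hil-Sch}) to a weighted moment series. Writing $\mu([t,1))=\int_{[t,1)}d\mu(s)$ and applying Fubini, $\int_{[0,1)}\frac{\mu([t,1))}{(1-t)^{2\alpha}}d\mu(t)=\iint_{\{s\ge t\}}(1-t)^{-2\alpha}d\mu(s)d\mu(t)$, and by the symmetry of the product measure $\mu\times\mu$ (the contribution of the diagonal is at most the whole integral) this is comparable to $\iint(1-\min(s,t))^{-2\alpha}d\mu(s)d\mu(t)$. Since $1-\min(s,t)\le 1-st\le 2\,(1-\min(s,t))$ for all $s,t\in[0,1)$, the last expression is in turn comparable to $\iint(1-st)^{-2\alpha}d\mu(s)d\mu(t)$; expanding $(1-st)^{-2\alpha}=\sum_{m=0}^\infty\frac{\Gamma(m+2\alpha)}{\Gamma(m+1)\Gamma(2\alpha)}(st)^m$ and integrating term by term (Tonelli) yields $\sum_{m=0}^\infty\frac{\Gamma(m+2\alpha)}{\Gamma(m+1)\Gamma(2\alpha)}\mu_m^2\asymp\sum_{m=0}^\infty(m+1)^{2\alpha-1}\mu_m^2$. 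Thus (\ref{Hil-Sch}) is equivalent to $\sum_{m=0}^\infty(m+1)^{2\alpha-1}\mu_m^2<\infty$.

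It then remains to compare the two series. If $\alpha>\frac12$, then $2(\alpha-1)>-1$, so $\sum_{n=0}^m(n+1)^{2(\alpha-1)}\asymp(m+1)^{2\alpha-1}$, and the two conditions coincide, giving the theorem for $\alpha>\frac12$. If $0<\alpha\le\frac12$, both conditions hold automatically: under the standing hypothesis $\mu$ satisfies the condition in Lemma \ref{Th1.1} with $p=2$, hence is a $1$-Carleson measure, so $\mu_m\lesssim(m+1)^{-1}$; then $\sum_{n=0}^m(n+1)^{2(\alpha-1)}$ is bounded (if $\alpha<\frac12$) or $O(\log(m+2))$ (if $\alpha=\frac12$), whence $\sum_{m}(\sum_{n=0}^m(n+1)^{2(\alpha-1)})\mu_m^2<\infty$, while $\mu([t,1))(1-t)^{-2\alpha}\lesssim(1-t)^{1-2\alpha}\le 1$ forces the integral in (\ref{Hil-Sch}) to be finite. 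Both sides of the asserted equivalence are therefore true, completing the proof.

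The step I expect to be the main obstacle is the second paragraph, i.e.\ identifying $\int_{[0,1)}\mu([t,1))(1-t)^{-2\alpha}d\mu(t)$ with the power-series quantity $\sum_{m}(m+1)^{2\alpha-1}\mu_m^2$: this rests on the elementary but slightly delicate two-sided estimate $1-st\asymp 1-\min(s,t)$ on $[0,1)^2$ and on a careful symmetrization via Fubini that keeps track of possible atoms of $\mu$ on the diagonal.
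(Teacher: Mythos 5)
Your proposal is correct, and its skeleton is the same as the paper's: compute $\sum_k\|\mathcal{H}_{\mu,\alpha}(z^k)\|_{H^2}^2=\sum_{k,n}|\mu_{n,k,\alpha}|^2$ against the monomial basis and compare the resulting moment series with the integral in (\ref{Hil-Sch}). Where you go beyond the paper is in actually justifying the final comparison, which the paper states as a bare ``$\asymp$'': your regrouping by $m=n+k$ gives $\sum_m\bigl(\sum_{n=0}^m(n+1)^{2(\alpha-1)}\bigr)\mu_m^2$, and your symmetrization plus the estimate $1-st\asymp 1-\min(s,t)$ identifies the integral with $\sum_m(m+1)^{2\alpha-1}\mu_m^2$; both steps check out. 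You are also right that these two series are only term-by-term comparable when $\alpha>\tfrac12$, and that for $0<\alpha\le\tfrac12$ the uniform equivalence genuinely fails (a point mass at $t_0$ already shows the two quantities have different orders as $t_0\to1^-$), so the equivalence of finiteness must instead be rescued by the standing hypothesis that $\mu$ is a $1$-Carleson measure, under which $\mu_m\lesssim(m+1)^{-1}$ and $\mu([t,1))\lesssim 1-t$ make both conditions hold automatically. This case split is a real gap in the paper's one-line ``$\asymp$'' that your argument repairs; the only cosmetic remark is that you should read the paper's factor $n^{2(\alpha-1)}$ as $(n+1)^{2(\alpha-1)}$, as you in fact do.
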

\begin{proof}
Take the orthonormal basis $\{e_k\}_{k=0}^\infty=z^k$ and notice that
$$
\begin{aligned}
\sum_{k=0}^{\infty}\|\mathcal{H}_{\mu,\alpha}(e_k)\|_{H^2}^2&=\sum_{k=0}^{\infty}\sum_{n=0}^{\infty}|\mu_{n,k,\alpha}|^2\\
&=\sum_{k=0}^{\infty}\sum_{n=0}^{\infty}n^{2(\alpha-1)}|\mu_{n,k}|^2\\
&=\sum_{k=0}^{\infty}\sum_{n=0}^{\infty}n^{2(\alpha-1)}\int_{[0,1)}^{}\int_{[0,1)}^{}(ts)^{n+k}d\mu(s)d\mu(t)\\
&\asymp\int_{[0,1)}^{}\frac{\mu([t,1))}{(1-t)^{2\alpha}}d\mu(t).
\end{aligned}
$$
Therefore, the operator  $\mathcal{H}_{\mu,\alpha}$ is a Hilbert-Schmidt operator on $H^2$ if and only if (\ref{Hil-Sch}) holds.
\end{proof}

\section{Essential norm of $\mathcal{H}_{\mu,\alpha}$ on $H^p$ }
In this section, we will give the essential norm of the operator $\mathcal{H}_{\mu,\alpha}$ from  $H^p(0<p\le1)$ into $H^p(1\le q<\infty)$.

\begin{theorem}
For $0<p\le1,\alpha>1$, and let $\mu$ be a $\left(\frac{1}{p}+\alpha-1\right)$-Carleson measure on $[0,1)$.  Then $$\|\mathcal{H}_{\mu,\alpha}\|_{e,H^p\to H^1}\approx\limsup_{s\to1^-}\frac{\mu([s,1))}{(1-s)^{\frac{1}{p}+\alpha-1}}.$$
\end{theorem}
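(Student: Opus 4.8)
Write $\beta:=\frac1p+\alpha-1$ and $L:=\limsup_{s\to1^-}\dfrac{\mu([s,1))}{(1-s)^{\beta}}$. Since $\alpha>1$ we have $\beta>\frac1p$, so $\mu$ is in particular a $\frac1p$-Carleson measure and $\mathcal H_{\mu,\alpha}:H^p\to H^1$ is bounded by Theorem \ref{Th1.2}(i); hence the essential norm is well defined and it suffices to establish the two matching estimates $\|\mathcal H_{\mu,\alpha}\|_{e,H^p\to H^1}\gtrsim L$ and $\|\mathcal H_{\mu,\alpha}\|_{e,H^p\to H^1}\lesssim L$.

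For the \emph{lower bound} I would reuse the families $f_a(z)=\dfrac{(1-a^2)^{1/p}}{(1-az)^{2/p}}$ and $g_a(z)=\log\dfrac{e}{1-az}$ from (\ref{eqn-abc}): they satisfy $\sup_{1/2<a<1}\|f_a\|_{H^p}\lesssim1$, $\sup_{1/2<a<1}\|g_a\|_{BMOA}\lesssim1$, and $f_a\to0$ uniformly on compact subsets of $\mathbb D$ as $a\to1^-$. For any compact $K:H^p\to H^1$, the argument behind Lemma \ref{lm2.5} gives $\|Kf_a\|_{H^1}\to0$, hence $\|\mathcal H_{\mu,\alpha}-K\|_{H^p\to H^1}\gtrsim\limsup_{a\to1^-}\|\mathcal H_{\mu,\alpha}(f_a)\|_{H^1}$. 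To bound the right-hand side I pair $\mathcal H_{\mu,\alpha}(f_a)$ against $g_a$ through Fefferman's duality and identity (\ref{eqn3.2}), let $r\to1^-$ by monotone convergence (all integrands being positive), and then repeat verbatim the test-function computation from the necessity part of Theorem \ref{Th1.2}(i); this yields $\|\mathcal H_{\mu,\alpha}(f_a)\|_{H^1}\gtrsim\mu([a,1))/(1-a)^{\beta}$ with a constant independent of $a$. Taking the $\limsup$ over $a$ and then the infimum over compact $K$ gives $\|\mathcal H_{\mu,\alpha}\|_{e,H^p\to H^1}\gtrsim L$.

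For the \emph{upper bound} I would, for each $0<s<1$, split $\mu=\mu_s+(\mu-\mu_s)$ with $\mu_s$ the restriction of $\mu$ to $(s,1)$ (as in (\ref{eqn-mu1})) and $\mu-\mu_s$ the restriction to $[0,s]$, so that $\mathcal H_{\mu,\alpha}=\mathcal H_{\mu_s,\alpha}+\mathcal H_{\mu-\mu_s,\alpha}$. The measure $\mu-\mu_s$ is supported in $[0,s]$, hence a vanishing $\beta$-Carleson measure, and therefore $\mathcal H_{\mu-\mu_s,\alpha}:H^p\to H^1$ is compact by Theorem \ref{Th3.5}(i); consequently $\|\mathcal H_{\mu,\alpha}\|_{e,H^p\to H^1}\le\|\mathcal H_{\mu_s,\alpha}\|_{H^p\to H^1}$ for every $s$. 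I then rerun the sufficiency estimate of Theorem \ref{Th1.2}(i) applied to $\mu_s$ while keeping track of constants — Minkowski's inequality, Lemma \ref{Lm1.3}(i) (valid since $\alpha>1$), \cite[Lemma 3.2]{12} to pass from a $\beta$-Carleson control of $\mu_s$ to a $\frac1p$-Carleson control of $d\mu_s(t)/(1-t)^{\alpha-1}$, and the Carleson embedding $H^p\hookrightarrow L^1(\mathbb D,\cdot)$ for $0<p\le1$ — to obtain $\|\mathcal H_{\mu_s,\alpha}\|_{H^p\to H^1}\lesssim\sup_{0\le t<1}\mu_s([t,1))/(1-t)^{\beta}$ with a constant depending only on $p$ and $\alpha$. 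Finally, since $\mu_s([t,1))=\mu((s,1))$ for $t\le s$ and $\mu_s([t,1))=\mu((t,1))$ for $t>s$, the last supremum is $\le\sup_{s\le t<1}\mu([t,1))/(1-t)^{\beta}$, and this quantity decreases to $L$ as $s\to1^-$; letting $s\to1^-$ gives $\|\mathcal H_{\mu,\alpha}\|_{e,H^p\to H^1}\lesssim L$, which together with the lower bound completes the proof.

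The step I expect to be delicate is the constant bookkeeping in the upper bound: one must verify that truncating $\mu$ to $(s,1)$ and then dividing by $(1-t)^{\alpha-1}$ produces measures whose relevant Carleson constants are dominated by $\sup_{t\ge s}\mu([t,1))/(1-t)^{\beta}$ times a constant that does not depend on $s$, since otherwise the final passage $s\to1^-$ would not reproduce $L$ up to a constant. By contrast the lower bound is essentially routine once identity (\ref{eqn3.2}) and the monotone limit $r\to1^-$ are in place, as it merely re-uses the computation already carried out in Theorem \ref{Th1.2}(i).
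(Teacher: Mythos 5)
Your proof is correct, and for the upper bound it follows the paper's strategy: split $\mu$ at level $s$, observe that one piece induces a compact operator, and bound the operator norm of the other piece by rerunning the sufficiency estimate of Theorem \ref{Th1.2}(i) with the Carleson constants tracked through \cite[Lemma 3.2]{12}. In fact your bookkeeping is the coherent version of what the paper prints: the paper asserts that $\mu_s=\mu|_{(s,1)}$ is the vanishing $\left(\frac1p+\alpha-1\right)$-Carleson measure and that $\mathcal H_{\mu_s,\alpha}$ is the compact piece, yet its final estimate bounds $\|\mathcal H_{\mu-\mu_s,\alpha}(f)\|_{H^1}$ by the Carleson norm of $\nu-\nu_s$ (supported in $[0,s]$), a quantity that does not tend to the claimed $\limsup$ as $s\to1^-$; the roles of $\mu_s$ and $\mu-\mu_s$ are evidently interchanged there, and your assignment --- compactness for the part supported in $[0,s]$, Carleson control $\sup_{t\ge s}\mu([t,1))/(1-t)^{\beta}\to L$ for the tail --- is the one under which the limit $s\to1^-$ actually produces $L$. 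For the lower bound you and the paper use the same test functions $f_\tau$ but different devices to estimate $\|\mathcal H_{\mu,\alpha}(f_\tau)\|_{H^1}$ from below: you pair against $g_\tau(z)=\log\frac{e}{1-\tau z}$ via Fefferman duality and identity (\ref{eqn3.2}), recycling the necessity computation of Theorem \ref{Th1.2}(i), whereas the paper invokes the coefficient estimate of Lemma \ref{lm2.6} (so that $\|g\|_{H^1}\gtrsim\sup_n|\hat g(n)|$) and computes the Taylor coefficients of $\mathcal H_{\mu,\alpha}(f_\tau)$ directly with $n\asymp(1-s)^{-1}$. Both routes are legitimate and give the same bound $\mu([\tau,1))/(1-\tau)^{\frac1p+\alpha-1}$; yours is slightly shorter because the integral estimate is already on record, while the paper's avoids a second appeal to $BMOA$ duality. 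The one point you share with the paper and should make explicit is why $\|Kf_\tau\|_{H^1}\to0$ for an \emph{arbitrary} compact $K:H^p\to H^1$ when $0<p<1$, where weak convergence is not available in the Banach-space sense; this needs a sentence, but it is not a defect relative to the paper's own argument.
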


\begin{proof}
We now give the upper estimate of $\mathcal{H}_{\mu,\alpha}$ from $H^p(0<p\le1)$ to $H^1$.

Since $\mu$ is a $\left(\frac{1}{p}+\alpha-1\right)$-Carleson measure on $[0,1)$, the operator $\mathcal{H}_{\mu,\alpha}$ from $H^p(0<p\le1)$ to $H^1$ is bounded by Theorem \ref{Th1.2}. For any $0<s<1$, let the positive measure $\mu_s$ defined by (\ref{eqn-mu1}). It is straightforward to confirm that $\mu_s$ is a vanishing $\left(\frac{1}{p}+\alpha-1\right)$-Carleson measure. We conclude that $\mathcal{H}_{\mu_s,\alpha}$ is compact from $H^p(0<p\le1)$ to $H^1$ by  Theorem 4.1. Then
$$\|\mathcal{H}_{\mu,\alpha}\|_{e,H^p\to H^1}\le\left\|\mathcal{H}_{\mu,\alpha}-\mathcal{H}_{\mu_s,\alpha}\right\|_{H^p\to H^1}=\inf\limits_{\|f\|_{H^p}=1}\left\|\mathcal{H}_{{\mu-\mu_s},\alpha}(f)\right\|_{H^1}.$$
By (\ref{eqn3.2}) we obtain that
$$
\begin{aligned}
&\left|\int_{0}^{2\pi}\overline{\mathcal{H}_{{\mu-\mu_s},\alpha}(f)(re^{i\theta})}g(re^{i\theta})d\theta\right|=\left|\int_{[0,1)}^{}R^{-1,\alpha-1}g(r^2t)\overline{f(t)}d(\mu-\mu_s)(t)\right|\\
\lesssim&\|g\|_\mathcal{B}\int_{[0,1)}^{}\frac{|f(t)|}{(1-r^2t)^{\alpha-1}}d(\mu-\mu_s)(t)\lesssim\|g\|_{BMOA}\int_{[0,1)}^{}\frac{|f(t)|}{(1-t)^{\alpha-1}}d(\mu-\mu_s)(t)\\
\lesssim&\|g\|_{BMOA}\|f\|_{H^p}\|\nu-\nu_s\|.
\end{aligned}
$$
where $d\nu(t)=\frac{1}{(1-t)^{\alpha-1}}d\mu(t)$ and $d\nu_s(t)=\frac{1}{(1-t)^{\alpha-1}}d\mu_s(t)$. By \cite[Lemma 3.2]{12}, we know that the positive measure $\nu-\nu_s$ is a $\frac{1}{p}$-Carleson measure. Thus,
$$\|\mathcal{H}_{\mu,\alpha}\|_{e,H^p\to H^1}\lesssim\limsup_{s\to1^-}\frac{\mu([s,1))}{(1-s)^{\frac{1}{p}+\alpha-1}}.$$

Now we  give the lower estimate for $\mathcal{H}_{\mu,\alpha}$.

Let $0<\tau<1$ and
$$f_\tau(z)=\frac{(1-\tau^2)^\frac{1}{p}}{(1-\tau z)^\frac{2}{p}}=\sum_{k=0}^{\infty}a_{k,\tau}z^n,$$
where $a_{k,\tau}=O\left((1-\tau^2)^{\frac{1}{p}}k^{\frac{2}{p}-1}\tau^k\right)$. Then $\{f_\tau\}$ is a bounded sequence in $H^p$ and $\lim_{\tau\to1^-}f_\tau(z)=0$ on any compact subset of $\mathbb{D}$. Since $f_\tau\to0$ weakly in $H^p$, we get that $\|Kf_\tau\|\to0$ as $\tau\to1$ for any compact operator $K$ on $H^p$. Furthermore
$$
\left\|\mathcal{H}_{\mu,\alpha}-K\right\|_{H^p\to H^1}\ge \left\|\left(\mathcal{H}_{\mu,\alpha}-K\right)f_\tau\right\|_{H^1}\ge\left\|\mathcal{H}_{\mu,\alpha}f_\tau\right\|_{H^1}-\left\|Kf_\tau\right\|_{H^1}.
$$
Using Lemma \ref{lm2.6}, we derive that
$$
\begin{aligned}
&\left\|\mathcal{H}_{\mu,\alpha}(f_\tau)\right\|_{H^1}\ge\sup\limits_{n}\sum_{k=0}^{\infty}\mu_{n,k,\alpha}a_{k,\tau}\\
=&\sup\limits_{n}\frac{\Gamma(n+\alpha)}{\Gamma(n+1)\Gamma(\alpha)}(1-\tau^2)^{\frac{1}{p}}\sum_{k=0}^{\infty}k^{\frac{2}{p}-1}\tau^k\int_{[0,1)}^{}t^{n+k}d\mu(t)\\
\ge&\sup\limits_{n}n^{\alpha-1}(1-\tau^2)^{\frac{1}{p}}\sum_{k=0}^{\infty}k^{\frac{2}{p}-1}\tau^k\int_{[s,1)}^{}t^{n+k}d\mu(t)\\
\ge&\sup\limits_{n}n^{\alpha-1}(1-\tau^2)^{\frac{1}{p}}s^n\mu([s,1))\sum_{k=0}^{\infty}k^{\frac{2}{p}-1}\tau^{k}s^k\\
=&\sup\limits_{n}n^{\alpha-1}s^n\frac{(1-\tau^2)^{\frac{1}{p}}}{{(1-s\tau)^{\frac{2}{p}}}}\mu([s,1)).
\end{aligned}
$$
Let $\tau=s$ and we choose $n$ such that $1-\frac{1}{n+1}\le s<1-\frac{1}{n}$. We find that
$$
\left\|\mathcal{H}_{\mu,\alpha}(f_\tau)\right\|_{H^1}\ge\sup\limits_{n}\frac{1}{e(1-s^2)^{\frac{1}{p}}(1-s)^{\alpha-1}}\mu([s,1))\ge\sup\limits_{n}\frac{1}{e(1-s)^{\frac{1}{p}+\alpha-1}}\mu([s,1)).
$$
It follows that
$$
\|\mathcal{H}_{\mu,\alpha}\|_{e,H^p\to H^1}\ge\limsup_{\tau\to1^-}\|\mathcal{H}_{\mu,\alpha}f_\tau\|_{H^1}
\gtrsim\limsup_{s\to1^-}\frac{\mu([s,1))}{(1-s)^{\frac{1}{p}+\alpha-1}}
$$
The proof is finished.
\end{proof}

\begin{theorem}
For $0<p\le1,1<q<\infty,\alpha>1$, and let $\mu$ be a $\left(\frac{1}{p}+\frac{1}{{q}'}+\alpha-1\right)$-Carleson measure on $[0,1)$. Then
$$
\|\mathcal{H}_{\mu,\alpha}\|_{e,H^p\to H^q}\approx\limsup_{s\to1^-}\frac{\mu([s,1))}{(1-s)^{\frac{1}{p}+\frac{1}{{q}'}+\alpha-1}}.
$$
\end{theorem}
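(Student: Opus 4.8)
The plan is to mirror the proof of the preceding theorem, tracking the exponent $\sigma:=\frac1p+\frac1{{q}'}+\alpha-1=\frac1p+\alpha-\frac1q$, and to replace the use of the duality $(H^1)^{\ast}\cong BMOA$ by the direct estimate carried out in the sufficiency half of Theorem \ref{th3.3}(i). (One could instead argue through $(H^q)^{\ast}\cong H^{{q}'}$, as in the preceding proof, but that requires a pointwise bound for $R^{-1,\alpha-1}g$ with $g\in H^{{q}'}$ that is not recorded here, whereas the direct route needs nothing beyond what is already available.) Note that, since $\alpha>1$, a $\sigma$-Carleson measure is in particular a $\frac1p$-Carleson measure, so $\mu$ satisfies the hypothesis of Lemma \ref{Th1.1} and $\mathcal{H}_{\mu,\alpha}:H^p\to H^q$ is bounded by Theorem \ref{th3.3}(i).

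\emph{Upper estimate.} Fix $0<s<1$ and let $\mu_s$ be as in (\ref{eqn-mu1}). The measure $\mu-\mu_s=\chi_{[0,s]}\mu$ carries no mass near $\partial\mathbb{D}$, hence is a vanishing $\sigma$-Carleson measure, so $\mathcal{H}_{\mu-\mu_s,\alpha}$ is compact from $H^p$ to $H^q$ by the $H^p\to H^q$ compactness result of Section 4 (applicable since $\alpha>1>\frac1q$); as $\nu\mapsto\mathcal{H}_{\nu,\alpha}$ is linear in the measure, $\mathcal{H}_{\mu,\alpha}-\mathcal{H}_{\mu-\mu_s,\alpha}=\mathcal{H}_{\mu_s,\alpha}$, whence $\|\mathcal{H}_{\mu,\alpha}\|_{e,H^p\to H^q}\le\|\mathcal{H}_{\mu_s,\alpha}\|_{H^p\to H^q}$. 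To bound the last norm I would repeat the computation in Theorem \ref{th3.3}(i): by Lemma \ref{Th1.1}, Minkowski's integral inequality and Lemma \ref{Lm1.3},
$$\big\|\mathcal{H}_{\mu_s,\alpha}(f)\big\|_{H^q}\lesssim\int_{[0,1)}\frac{|f(t)|}{(1-t)^{\alpha-\frac1q}}\,d\mu_s(t).$$
By \cite[Lemma 3.2]{12}, $d\nu(t):=(1-t)^{-(\alpha-\frac1q)}\,d\mu(t)$ is a $\frac1p$-Carleson measure, hence so is its restriction $\nu_s$ to $[s,1)$, and the displayed integral is $\lesssim\mathcal{N}(\nu_s)\|f\|_{H^p}$; thus $\|\mathcal{H}_{\mu_s,\alpha}\|_{H^p\to H^q}\lesssim\mathcal{N}(\nu_s)\asymp\sup_{a\ge s}(1-a)^{-1/p}\nu([a,1))$. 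Letting $s\to1^{-}$ and decomposing $[a,1)$ dyadically then yields $\limsup_{s\to1^{-}}\mathcal{N}(\nu_s)\lesssim\limsup_{a\to1^{-}}\mu([a,1))(1-a)^{-\sigma}$, which is the required bound.

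\emph{Lower estimate.} Use the functions $f_\tau(z)=\frac{(1-\tau^2)^{1/p}}{(1-\tau z)^{2/p}}$, $0<\tau<1$, from the preceding proof: they are bounded in $H^p$ and tend to $0$ uniformly on compact subsets of $\mathbb{D}$, so $\|Kf_\tau\|_{H^q}\to0$ as $\tau\to1^{-}$ for every compact $K:H^p\to H^q$, and hence $\|\mathcal{H}_{\mu,\alpha}-K\|_{H^p\to H^q}\ge\|\mathcal{H}_{\mu,\alpha}f_\tau\|_{H^q}-\|Kf_\tau\|_{H^q}$. Now write $\mathcal{H}_{\mu,\alpha}(f_\tau)=\mathcal{I}_{\mu,\alpha}(f_\tau)$ (Lemma \ref{Th1.1}) and evaluate at the real point $z=\tau$, where $f_\tau$ and the kernel $(1-t\tau)^{-\alpha}$ are positive: restricting the integral to $[\tau,1)$, where $f_\tau(t)\ge(1-\tau^2)^{-1/p}$ and $(1-t\tau)^{-\alpha}\ge(1-\tau^2)^{-\alpha}$, gives $\mathcal{I}_{\mu,\alpha}(f_\tau)(\tau)\gtrsim\mu([\tau,1))(1-\tau)^{-(\frac1p+\alpha)}$. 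Combining this with the pointwise bound $|g(z)|\lesssim\|g\|_{H^q}(1-|z|)^{-1/q}$ for $g\in H^q$,
$$\big\|\mathcal{H}_{\mu,\alpha}(f_\tau)\big\|_{H^q}\gtrsim(1-\tau)^{1/q}\,\mathcal{I}_{\mu,\alpha}(f_\tau)(\tau)\gtrsim\frac{\mu([\tau,1))}{(1-\tau)^{\sigma}}.$$
Taking $\limsup_{\tau\to1^{-}}$ and then the infimum over all compact $K$ yields $\|\mathcal{H}_{\mu,\alpha}\|_{e,H^p\to H^q}\gtrsim\limsup_{s\to1^{-}}\mu([s,1))(1-s)^{-\sigma}$, which together with the upper estimate finishes the proof.

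The step I expect to be the main obstacle is the end of the upper estimate: \cite[Lemma 3.2]{12} as stated only compares Carleson norms, whereas here one needs the sharper fact that $\limsup_{a\to1^{-}}(1-a)^{-1/p}\int_{[a,1)}(1-t)^{-(\alpha-\frac1q)}d\mu(t)\lesssim\limsup_{a\to1^{-}}\mu([a,1))(1-a)^{-\sigma}$. This is obtained by splitting $[a,1)=\bigcup_{j\ge0}\big[1-2^{-j}(1-a),\,1-2^{-j-1}(1-a)\big)$, using on the $j$-th block the tail bound for $\mu$ valid near $1$, and summing a convergent geometric series; apart from writing this telescoping estimate out carefully, everything runs parallel to Theorem \ref{th3.3}(i) and to the preceding essential-norm theorem.
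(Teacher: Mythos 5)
Your proof is correct, and its overall architecture --- truncating $\mu$ to produce a compact piece for the upper bound, and testing against $f_\tau(z)=(1-\tau^2)^{1/p}(1-\tau z)^{-2/p}$ for the lower bound --- is the same as the paper's. There are, however, two genuine differences worth recording. First, for the lower estimate the paper passes from $\|\mathcal{H}_{\mu,\alpha}(f_\tau)\|_{H^q}$ to a radial integral via the Fej\'er--Riesz inequality and then computes $\int_0^1(1-\tau x)^{-q\alpha}\,dx$, whereas you evaluate $\mathcal{I}_{\mu,\alpha}(f_\tau)$ at the single point $z=\tau$ and invoke the growth estimate $|g(z)|\lesssim\|g\|_{H^q}(1-|z|)^{-1/q}$ (the $H^q$ analogue of (\ref{eqn-3.4})). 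Both routes yield $\mu([\tau,1))(1-\tau)^{-\sigma}$ with $\sigma=\frac1p+\frac1{q'}+\alpha-1$; yours is the more elementary, needing nothing beyond a standard pointwise bound, while the paper's needs $q\alpha>1$ (automatic here since $\alpha>1$). Second, in the upper estimate your bookkeeping of the two pieces of $\mu$ is the correct one: $\chi_{[0,s]}\mu=\mu-\mu_s$ is compactly supported, hence a vanishing $\sigma$-Carleson measure and gives the compact operator, and what remains to be estimated is $\|\mathcal{H}_{\mu_s,\alpha}\|_{H^p\to H^q}$ in terms of the tail of $\mu$. The paper's write-up interchanges these roles (it asserts that $\mu_s=\chi_{(s,1)}\mu$ itself is vanishing and that $\mathcal{H}_{\mu_s,\alpha}$ is compact, and also writes $\inf$ where $\sup$ over the unit ball is meant), so your version actually repairs a slip. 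You also make explicit the quantitative step that the paper compresses into a citation of \cite[Lemma 3.2]{12} together with the unexplained quantity $\|\nu-\nu_s\|$: namely that the $\frac1p$-Carleson constant of $(1-t)^{-(\alpha-\frac1q)}\chi_{[s,1)}\,d\mu(t)$ is controlled by $\sup_{b\ge s}\mu([b,1))(1-b)^{-\sigma}$. Your dyadic decomposition of $[a,1)$ does the job, since on the $j$-th block the weight is at most $(2^{-j-1}(1-a))^{-(\alpha-\frac1q)}$ and the mass at most $\sup_{b\ge a}\mu([b,1))(1-b)^{-\sigma}\cdot(2^{-j}(1-a))^{\sigma}$, leaving a geometric series with ratio $2^{-1/p}<1$; so the step you flagged as the main obstacle does go through.
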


\begin{proof}
Since $\mu$ is a $\left(\frac{1}{p}+\frac{1}{{q}'}+\alpha-1\right)$-Carleson measure on $[0,1)$, the operator $\mathcal{H}_{\mu,\alpha}$ from $H^p$ to $H^q$ is bounded by Theorem \ref{th3.3}. For any $0<s<1$, let the positive measure $\mu_s$ defined by (\ref{eqn-mu1}). It is straightforward to confirm that $\mu_s$ is a vanishing $\left(\frac{1}{p}+\frac{1}{{q}'}+\alpha-1\right)$-Carleson measure. We conclude that $\mathcal{H}_{\mu_s,\alpha}$ is compact from $H^p$ to $H^q$ by Theorem 4.1. Then
$$\|\mathcal{H}_{\mu,\alpha}\|_{e,H^p\to H^q}\le\left\|\mathcal{H}_{\mu,\alpha}-\mathcal{H}_{\mu_s,\alpha}\right\|_{H^p\to H^q}=\inf\limits_{\|f\|_{H^p}=1}\left\|\mathcal{H}_{{\mu-\mu_s},\alpha}(f)\right\|_{H^q}.$$
Appealing to Lemma \ref{Lm1.3}, we obtain that
$$
\begin{aligned}
\left\|\mathcal{H}_{\mu-\mu_s,\alpha}(f)\right\|_{H^q}&\le \sup\limits_{0<r<1}\left\{\frac{1}{2\pi}\int_{0}^{2\pi}\left(\int_{[0,1)}^{}\frac{|f(t)|}{\left|1-tre^{i\theta}\right|^\alpha}d\left(\mu-\mu_s\right)(t)\right)^qd\theta\right\}^{\frac{1}{q}}\\
&\le\sup\limits_{0<r<1}\int_{[0,1)}^{}|f(t)|\left(\frac{1}{2\pi}\int_{0}^{2\pi}\frac{d\theta}{\left|1-tre^{i\theta}\right|^{q\alpha}}\right)^{\frac{1}{q}}d\left(\mu-\mu_s\right)(t)\\
&\lesssim\int_{[0,1)}^{}\frac{|f(t)|}{(1-t)^{\alpha-\frac{1}{q}}}d\left(\mu-\mu_s\right)(t)\\
&\lesssim\|f\|_{H^p}\|\nu-\nu_s\|.
\end{aligned}
$$
where $d\nu(t)=\frac{1}{(1-t)^{\alpha-\frac{1}{q}}}d\mu(t)$ and $d\nu_s(t)=\frac{1}{(1-t)^{\alpha-\frac{1}{q}}}d\mu_s(t)$. The positive measure $\nu-\nu_s$ is a $\frac{1}{p}$-Carleson measure by \cite[Lemma 3.2]{12}. Thus,
$$\|\mathcal{H}_{\mu,\alpha}\|_{e,H^p\to H^q}\lesssim\limsup_{s\to1^-}\frac{\mu([s,1))}{(1-s)^{\frac{1}{p}+\frac{1}{{q}'}+\alpha-1}}.$$

On the other hand, let $0<\tau<1$ and
$$f_\tau(z)=\frac{(1-\tau^2)^\frac{1}{p}}{(1-\tau z)^\frac{2}{p}}.$$
Then $\{f_\tau\}$ is a bounded sequence in $H^p$ and $\lim_{\tau\to1^-}f_\tau(z)=0$ on any compact subset of $\mathbb{D}$. Since $f_\tau\to0$ weakly in $H^p$, we get that $\|Kf_\tau\|\to0$ as $\tau\to1$ for any compact operator $K$ on $H^p$. Furthermore
$$
\left\|\mathcal{H}_{\mu,\alpha}-K\right\|_{H^p\to H^q}\ge \left\|\left(\mathcal{H}_{\mu,\alpha}-K\right)f_\tau\right\|_{H^q}\ge\left\|\mathcal{H}_{\mu,\alpha}f_\tau\right\|_{H^q}-\left\|Kf_\tau\right\|_{H^q}.
$$
by Fej$\acute{e}$r-Riesz inequality (see \cite[page 46]{3}) we have that

$$
\begin{aligned}
\left\|\mathcal{H}_{\mu,\alpha}(f_\tau)\right\|_{H^q}&=\left(\frac{1}{2\pi}\int_{0}^{2\pi}\left|\int_{0}^{1}\frac{f_\tau(t)}{\left(1-te^{i\theta}\right)^\alpha}d\mu(t)\right|^qd\theta\right)^{\frac{1}{q}}\\
&\gtrsim\left(\int_{0}^{1}\left|\int_{0}^{1}\frac{f_\tau(t)}{\left(1-tx\right)^\alpha}d\mu(t)\right|^qdx\right)^{\frac{1}{q}}\\
&=\left(\int_{0}^{1}\left(\int_{0}^{1}\frac{(1-\tau^2)^{\frac{1}{p}}}{\left(1-\tau t\right)^{\frac{2}{p}}(1-tx)^\alpha }d\mu(t)\right)^qdx\right)^{\frac{1}{q}}\\
&\ge\left(\int_{0}^{1}\left(\int_{\tau}^{1}\frac{(1-\tau^2)^{\frac{1}{p}}}{\left(1-\tau t\right)^{\frac{2}{p}}(1-tx)^\alpha }d\mu(t)\right)^qdx\right)^{\frac{1}{q}}\\
&\gtrsim\frac{\mu([\tau,1))}{(1-\tau)^{\frac{1}{p}}}\left(\int_{0}^{1}\frac{1}{(1-\tau x)^{q\alpha}}dx\right)^{\frac{1}{q}}\\
&\asymp\frac{\mu([\tau,1))}{(1-\tau)^{\frac{1}{p}+\alpha-\frac{1}{q}}}.
\end{aligned}
$$
It follows that
$$
\|\mathcal{H}_{\mu,\alpha}\|_{e,H^p\to H^q}\ge\limsup_{\tau\to1^-}\|\mathcal{H}_{\mu,\alpha}f_\tau\|_{H^q}
\gtrsim\limsup_{s\to1^-}\frac{\mu([\tau,1))}{(1-\tau)^{\frac{1}{p}+\frac{1}{{q}'}+\alpha-1}}
$$
The proof is finished.
\end{proof}

\subsection*{Conflicts of Interest}
The authors declare that there are no conflicts of interest regarding the publication of this paper.

 \end{document}